\theoremstyle{plain}
\newcommand{\printname}[1] {}
\newtheorem{theorem}{Theorem}[section]
\newtheorem{proposition}[theorem]{Proposition}
\newtheorem{lemma}[theorem]{Lemma}
\newtheorem{corollary}[theorem]{Corollary}
\newtheorem{definition}[theorem]{Definition}
\newtheorem{example}[theorem]{Example}
\newtheorem{remark}[theorem]{Remark}
\numberwithin{equation}{section}
\newcommand{\R}{\mathbb R}
\newcommand{\Z}{\mathbb Z}
\newcommand{\rmap}{\longrightarrow}
\newcommand*{\longhookrightarrow}{\ensuremath{\lhook\joinrel\relbar\joinrel\rightarrow}}
\DeclareMathOperator*{\add}{+}  
\newcommand{\soma}[1] {\displaystyle \add_{\scriptscriptstyle #1} \,}    
\DeclareMathOperator*{\sub}{-}  
\newcommand{\dif}[1] {\displaystyle \sub_{\scriptscriptstyle #1} \,}
\newcommand{\leftexp}[2]{{\vphantom{#2}}^{\scriptscriptstyle{#1}}{#2}}   
\newcommand{\rightexp}[2]{#1^{\scriptscriptstyle{#2}}}   
\newcommand{\bbd}{\mathbb{D}}           
\newcommand{\dec}[1] {\mathrm{Dec}(#1)} 
\newcommand{\hor}{\rm hor}
\newcommand{\ver}{\rm ver}
\newcommand{\id}{\mathrm{id}}
\newcommand{\Lie}{\mathcal{L}}          
\newcommand{\X}{\mathcal{X}}            
\newcommand{\Y}{\mathcal{Y}}            
\newcommand{\Arrow}{\rightarrow}        
\newcommand{\lrrow}{\longrightarrow}    
\newcommand{\hrrow}{\hookrightarrow}    
\newcommand{\D}{\mathcal{D}}                              
\newcommand{\Hom}{\operatorname{Hom}}                     
\newcommand{\End}{\operatorname{End}}                     
\newcommand{\VB}{\mathcal{VB}}                            
\newcommand{\hnabla}{\nabla^{\scriptscriptstyle \Hom}}    
\newcommand{\uHom}{\underline{\Hom}}                      
\newcommand{\uEnd}{\underline{\End}}                      
\newcommand{\Rep}{\R \mathrm{ep}}                         
\newcommand{\comment}[1]{}
\begin{document}
\title{$\VB$-algebroid morphisms and representations up to homotopy}

\author{T. Drummond }
\address{Universidade Federal do Rio de Janeiro, Instituto de Matem\'atica, 21945-970, Rio de Janeiro - Brazil.}
\email{drummond@impa.br}

\author{M. Jotz Lean}
\address{School of Mathematics and Statistics, The University of Sheffield, Hicks Building, Hounsfield Road, Sheffield, S37RH, United Kingdom.} 
\email{M.Jotz-Lean@sheffield.ac.uk}


\author{C. Ortiz}
\address{Instituto de Matemática e Estatística, Universidade de São Paulo, Rua do Matão 1010, Cidade Universitária, 05508-090, São Paulo, Brazil} 
\email{cortiz@ime.usp.br}



\begin{abstract}

  We show in this paper that the correspondence between $2$-term
  representations up to homotopy and $\mathcal{VB}$-algebroids,
  established in \cite{GrMe10a}, holds also at the level of morphisms.
  This correspondence is hence an equivalence of
  categories.  As an application, we study foliations and
  distributions on a Lie algebroid, that are compatible both with the linear
  structure and the Lie algebroid structure. In particular, we show
  how infinitesimal ideal systems in a Lie algebroid $A$ are
  related with subrepresentations of the adjoint representation of
  $A$.

\end{abstract}
\maketitle
\tableofcontents

\setcounter{tocdepth}{0}

\section{Introduction}             
There are several definitions of ideals in Lie algebroids.  The most
obvious one is the following: Let $(q\colon A\to M, \rho,
[\cdot\,,\cdot])$ be a Lie algebroid.  An ideal in $A$ is a subbundle
$I\subseteq A$ over M, such that the space of sections $\Gamma(I)$ is
an ideal in $\Gamma(A)$ endowed with the Lie bracket
$[\cdot,\cdot]$. The first immediate consequence of this definition is
the inclusion $I\subseteq \ker(\rho)$, which shows that $I$ is totally
intransitive. This notion of ideal is hence not very useful.  For
instance, an ideal in this sense only corresponds to a surjective
morphism of algebroids over the same base.  Mackenzie defines
\emph{ideal systems} in his book \cite{Mackenzie05} and shows that the
kernel of a fibration of Lie algebroids is an ideal system in his
sense.  Two of the authors define in \cite{JoOr13} an infinitesimal
version of this.  Infinitesimal ideal systems appear naturally in the
study of multiplicative foliations on Lie groupoids, a subject which
have drawn some attention in connection to geometric quantization of
Poisson manifolds \cite{Hawkins08} and also in a modern approach to
Cartan's work on Lie pseudogroups \cite{CrSaSt12}.  Multiplicative
foliations on a Lie group are in one-to-one correspondence with ideals
in its Lie algebra \cite{Ortiz08,Jotz11}.  An ideal in a Lie algebra
is a subrepresentation of its adjoint representation.  Hence, with the proper
notion of adjoint representation of Lie algebroids, one expects
infinitesimal ideal systems to be equivalent to some
subrepresentations of the adjoint representation. This paper explains
how infinitesimal ideal systems and the adjoint representation of Lie
algebroid defined by Gracia-Saz and Mehta \cite{GrMe10a} and
independently by Arias Abad and Crainic \cite{ArCr12} are related.

\medskip

The approach of Gracia-Saz and Mehta to study Lie algebroid
representations is to view them as $\VB$-algebroids. For instance, flat
$A$-connections on a vector bundle $E \Arrow M$ are in one-to-one
correspondence with $\VB$-algebroid structures on
$$
\begin{CD}
A\oplus E @>>> E\\
@VVV   @VVV\\
A @>>> M.
\end{CD}
$$
In general, flat $A$-superconnections correspond to splittings of a
canonical short exact sequence of vector bundles, that is associated
to the given $\VB$-algebroid.
The tangent prolongation of a Lie
algebroid is for instance a $\VB$-algebroid that corresponds to the adjoint
representation by splitting (via the choice of a connection on $A$)
the following exact sequence
$$
0 \longrightarrow T^*M\otimes A \longrightarrow J^1(A) \longrightarrow
A \longrightarrow 0,
$$
where $J^1A$ is the first order jet bundle of $A$.
Note that flat superconnections are the two-term case of the representations
up to homotopy defined by Arias Abad and Crainic in
\cite{ArCr12}.

Our main result on $\VB$-algebroids is
the existence of a  one-to-one correspondence between morphisms
of representation up to homotopy on 2-term complexes and morphisms of
$\VB$-algebroids (Theorem \ref{main}).
This shows that the
correspondence established by Gracia-Saz and Mehta in
\cite{GrMe10a} is actually the correspondence of objects in an equivalence of categories between the
category of representation up to homotopy and the category of
$\VB$-algebroids. 

We then apply our results on morphisms to the inclusion of
distributions inside the tangent bundle (see Theorem \ref{ideals}).
We obtain the equivalence of infinitesimal ideal systems in a Lie algebroid with
subrepresentations of its adjoint  and double
representations up to homotopy. We also discuss the case of general
(non-integrable) subbundles of the tangent of a Lie
algebroid. In that case, the representations up to homotopy lead to a
new interpretation of the infinitesimal description of
multiplicative distributions obtained by \cite{CrSaSt12} (see Theorem
\ref{IM_prop}).

\medskip

This paper is organized as follows. Sections 2 and 3 recall
background knowledge on representations up to homotopy and double
vector bundles.  Section 4 establishes the one-to-one
correspondence between $\VB$-algebroid morphisms and morphisms of
representations up to homotopy on 2-term complexes. We revisit Lie
bialgebroids and IM-2 forms from the perspective of morphisms of
representations up to homotopy. Section 5 studies (integrable and
non integrable) subbundles of the tangent of a Lie algebroid, that are
compatible with the linear and with the Lie algebroid structure.

We show in the appendix that the dictionary between
$\mathcal{VB}$-algebroids and $2$-term representations up to homotopy
is compatible with dualizations
in each category.\\

\textbf{Acknowledgements:} The authors would like to thank Henrique
Bursztyn and Jim Stasheff for useful comments that have improved the presentation of
this work.  \comment{Thiago: I tried to talk about this with Camilo
  but he never gave any valuable suggestion.}  \comment{Madeleine:
  yes, I know what you mean... What about Jim Stasheff, Cristian? Do
  you want to thank him?}  Drummond acknowledges support of
CAPES-FCT at IST-Lisboa, where part of this work was developed.  Jotz
was supported by the Dorothea-Schl\"ozer program of the University of
G\"ottingen, and a fellowship for prospective researchers of the Swiss
NSF (PBELP2\_137534) for research conducted at UC Berkeley, the
hospitality of which she is thankful for.  Ortiz would like to thank
IMPA (Rio de Janeiro) for a 2012-Summer Postdoctoral Fellowship and
its hospitality while part of this work was carried out.


\section{Representations up to homotopy of Lie algebroids}

We recall here some background material on representations up to
homotopy. We  follow mostly \cite{ArCr12}.

\subsection{Definition and examples.}

Let $E \Arrow M$ be a vector bundle and $V= \bigoplus_{k\in
  \mathbb{Z}} V_k$  a graded vector bundle. The space of $V$-valued
$E$-differential forms, $\Omega(E; V):=\Gamma(\wedge^{\bullet}
E^*\otimes V)$, has a grading given by
$$
\Omega(E;V)_k = \bigoplus_{i+j=k}\Gamma(\wedge^i E^* \otimes V_j)
$$
and a natural  (graded-)module structure over the algebra $\Omega(E):= \Gamma(\wedge^{\bullet}E^*)$.

If $W = \bigoplus_{k\in \mathbb{Z}} W_k$ is a second graded vector
bundle, $\underline{\Hom}(V, W)$ is the graded vector bundle whose
degree $k$ part is
$$
\underline{\Hom}(V, W)_k = \bigoplus_{i\in \mathbb{Z}} \Hom(V_i, W_{i+k}).
$$

\medskip

Let now $(A, [\cdot, \cdot], \rho_A)$ be a Lie algebroid over $M$.
\begin{definition}
  A \textit{homogeneous} $A$-connection $\nabla$ on the graded vector
  bundle $V=\bigoplus_{k\in \Z} V_k$ is an $A$-connection $\nabla:
  \Gamma(A) \times \Gamma(V) \Arrow \Gamma(V)$ such that $\nabla_a$
  preserves $\Gamma(V_k)$, for all $k \in \mathbb{Z}$ and $a \in
  \Gamma(A)$. Equivalently, an $A$-connection on $V$ is given by a
  family $\{\nabla^k\}_{k\in \mathbb{Z}}$, where each $\nabla^k$ is an
  $A$-connection on $V_k$.
\end{definition} 

From now on, we assume that all connections on graded vector bundles are homogeneous.

\begin{definition}
  Let $V$ be a graded vector bundle. A representation up to homotopy
  of $A$ on $V$ is a degree one map $ \D:
  \Omega(A;V)_{\scriptscriptstyle \bullet} \Arrow
  \Omega(A;V)_{\scriptscriptstyle \bullet + 1} $ such that $\D^2=0$
  and
\begin{equation}\label{D_eq}
  \D(\alpha \wedge \omega) = d_A\alpha \wedge \omega + (-1)^{k} \alpha \wedge \D(\omega), \text{ for } \alpha \in \Omega^k(A), \, \omega \in \Omega(A;V),
\end{equation}
where $d_A: \Omega^{\bullet}(A) \Arrow \Omega^{\bullet+1}(A)$ is the Lie algebroid differential
\begin{align*}
d_A\alpha(a_1, \dots, a_{k+1}) & = \sum_{i=1}^k (-1)^{i+1} \Lie_{\rho_A(a_i)}\alpha(a_1, \dots, a_{i-1}, a_{i+1},  \dots, a_k)\\
& \hspace{-20pt}+ \sum_{1\leq i < j \leq k} (-1)^{i+j}\alpha([a_i,a_j], a_1, \dots, a_{i-1}, a_{i+1}, \dots, a_{j-1}, a_{j+1}, \dots, a_k).
\end{align*}
\end{definition}

\begin{definition}\label{rep_morf}
  \em A morphism between two representations up to homotopy of $A$ is
  a degree zero $\Omega(A)$-linear map
$$
\Omega(A; V) \Arrow \Omega(A; W)
$$
which intertwines the differentials $\D_W$ and $\D_V$. We denote it by
$(A, V) \Rightarrow(A, W)$.
\end{definition}

In this paper we are mostly concerned with
representations up to homotopy on graded vector bundles $V$ concentrated in degree 0
and 1. These are called \textit{2-term graded vector bundles} and the
representations up to homotopy of $A$ on 2-term graded vector bundles form a category which we denote by $\R
\mathrm{ep}^{2}(A)$. We denote by $\mathrm{Rep}^2(A)$ the set of
isomorphism classes of objects of $\R \mathrm{ep}^{2}(A)$.

For a 2-term representation up to homotopy $V \in \R
\mathrm{ep}^2(A)$, the derivation property \eqref{D_eq} implies that the differential $\D: \Omega(A; V) \Arrow
\Omega(A;V)$ is determined by
\begin{enumerate}
\item [(1)] a bundle map $\partial: V_0 \to V_1$;
\item [(2)] an $A$-connection $\nabla$ on $V$ compatible with
  $\partial$ (i.e.~$\partial \circ \nabla^0 = \nabla^1
  \circ \partial$);
\item [(3)] an element $K \in \Omega^2(A, \uEnd(V)_{-1})= \Omega^2(A,
  \Hom(V_1, V_0))$ such that $d_{\nabla^{\scriptscriptstyle \End}}K=0$
  and the diagram below commutes
$$
\xy
(0,0)*+{V_0}="A"; (20,0)*+{V_1}="B"; (0,20)*+{V_0}="C"; (20,20)*+{V_1}="D";
{\ar@{->}_{\partial}"A";"B"};
{\ar@{->}^{\partial}"C";"D"};
{\ar@{->}_{R_{\nabla^0}} "C"; "A"}; 
{\ar@{->}^{R_{\nabla^1}} "D"; "B"};
{\ar@{->}_{-K} "D"; "A"}
\endxy
$$
where $R_{\nabla^i}$ is the curvature of $\nabla^i$, for $i=0,1$. We
say that $(\partial, \nabla, K)$ are \textit{the structure operators}
for $V \in \R \mathrm{ep}^{2}(A)$.
\end{enumerate}
We refer to \cite{ArCr12} for a detailed exposition of the correspondence $\D \mapsto (\partial, \nabla, K)$ (pointing out that our sign convention for $K$ is different from the one in \cite{ArCr12}).

For $V, W \in \Rep^2(A)$, a morphism $(A, V) \Rightarrow (A, W)$ is determined
by a triple $(\phi_0, \phi_1, \Phi)$, where $\phi_0: V_0 \Arrow W_0$,
$\phi_1: V_1 \Arrow W_1$ are bundle maps and $\Phi \in \Omega^1(A;
\Hom(V_1, W_0))$, satisfying 
\begin{equation}\label{comp1}
\phi_1\circ \partial_V =  \partial_W \circ\phi_0,\\
\end{equation}
\begin{equation}\label{comp2}
\hnabla_a (\phi_0, \phi_1) = (\Phi_{a} \circ \partial_V, \,\partial_W
\circ \Phi_{a}) \quad \text{ for all } a\in\Gamma(A)\\
\end{equation}
and
\begin{equation}\label{comp3}
d_{\hnabla}\Phi= \phi_0 \circ K_V - K_W\circ \phi_1 ,
\end{equation}
where $\hnabla$ is the $A$-connection on $\uHom(V,W)$ (see
\cite{ArCr12} for more details).

\medskip

In the following, given vector bundles $E, E'$ over $M$, we denote by
$E_{[0]}\oplus E'_{[1]}$ the graded vector bundle consisting of $E$ in degree
$0$ and of $E'$ in degree $1$.

\begin{example}[Double representation]\label{double}\em
  Let $B \Arrow M$ be a vector bundle and consider the graded vector
  bundle $V=B_{[0]}\oplus B_{[1]}$.  Any connection $\nabla:
  \Gamma(TM)\times \Gamma(B) \Arrow \Gamma(B)$ induces a
  representation up to homotopy of $TM$ on $V$ by taking
  $\partial=\mathrm{id}_B$, $\nabla^0=\nabla^1 = \nabla$ and $K =
  -R_{\nabla}$, the curvature of $\nabla$. The isomorphism class of
  this representation does not depend on the choice of $\nabla$ and is
  called the \textit{double representation of $TM$ on $B$}. We denote
  it by $\mathcal{D}(B) \in \mathrm{Rep}^{2}(TM)$ and the
  representation itself by $\mathcal{D}_{\nabla}(B) \in \R
  \mathrm{ep}^2(A)$.
\end{example}

\begin{example}[Adjoint representation]\label{adjoint}
  Let $(A, [\cdot,\cdot]_A, \rho_A)$ be a Lie algebroid over $M$.  Any
  connection $\nabla: \Gamma(TM) \times \Gamma(A) \Arrow \Gamma(A)$ on
  $A$ induces a representation up to homotopy of $A$ on
  $V=A_{[0]}\oplus TM_{[1]}$ in the following manner. The map
  $\partial$ is just the anchor $\rho_A: A \Arrow TM$. The
  $A$-connection $\nabla^{\rm bas}$ on $V$ (called the \textit{basic
    connection}) has degree zero and degree one parts given by
  \hspace{-35pt}
$$
\begin{array}{rccl}
\nabla^{\rm bas}:&  \Gamma(A) \times \Gamma(A)& \longrightarrow & \Gamma(A)\\
             &    (a,b) & \longmapsto & [a, b]_A + \nabla_{\rho_A(b)}a. 
\end{array}
$$
and 
$$
\begin{array}{rccl}
\nabla^{\rm bas}:&  \Gamma(A) \times \Gamma(TM)& \longrightarrow & \Gamma(TM)\\
             &    (a,X) & \longmapsto & [\rho_A(a), X]_A + \rho_A(\nabla_X a) \vspace{5pt},
\end{array}
$$
respectively. The element $K$ is the \textit{basic curvature}
$R_{\nabla}^{\rm bas} \in \Omega^2(A, \operatorname{Hom}(TM, A))$
defined by
$$
R_{\nabla}^{\rm bas}(a, b)(X) = \nabla_X [a, b] - [\nabla_X a, b] -[a, \nabla_X b]  + \nabla_{\nabla_a^{\rm bas} X } \,b 
- \nabla_{\nabla_b^{\rm bas} X }\, a.
$$
As before, the isomorphism class of this representation does not
depend on the choice of $\nabla$ and it is called the \textit{adjoint
  representation of $A$}. We denote it by $\mathrm{ad} \in
\mathrm{Rep}^{2}(A)$ and the representation itself by
$\mathrm{ad}_{\nabla} \in \R \mathrm{ep}^2(A)$.
\end{example}

Given a 2-term representation $V \in \R \mathrm{ep}^2(A)$ with
structure operators $(\partial, \nabla, K)$ of $A$ on $V= V_0 \oplus
V_1$, its dual is the representation $V^{\top} \in \R
\mathrm{ep}^2(A)$, where $V^{\top}_0 = V_1^*$, $V^{\top}_1 = V_0^*$,
with structure operators given by
\begin{equation}\label{dual_struct}
\partial_{V^{\top}}= \partial^*, \,\,\, \nabla^{V^{\top}} = \nabla^* \text{ and } K_{V^{\top}} = - K^*
\end{equation}
where $\nabla^*$ is the $A$-connection dual to $\nabla$, given by
\begin{equation}\label{dual_conn}
  \langle \nabla^*_a \xi, v \rangle + \langle \xi, \nabla_a v \rangle = \Lie_{\rho_A(a)} \langle \xi, v \rangle, \,\, \forall v \in \Gamma(V),  \, \xi \in \Gamma(V^{\top}).
\end{equation}

\begin{example}[Coadjoint representation]
  The coadjoint representation is the representation of $A$ on
  $T^*M_{[0]}\oplus A^*_{[1]}$ dual to the adjoint representation. It
  is denoted by $\mathrm{ad}^{\top}(A) \in \mathrm{Rep}^{2}(A)$.
\end{example}

\subsection{Pullbacks.}
We define here morphisms between 2-term representations up to homotopy of
different Lie algebroids. Let $(A', [\cdot, \cdot]_{A'}, \rho_{A'})$
be another Lie algebroid over $M$ and $T: A \Arrow A'$ a Lie
algebroid morphism over $\mathrm{id}_M$. Choose  $W \in \R
\mathrm{ep}^2(A')$ with structure operators $(\partial, \nabla, K)$.

Define $\nabla^{T}: \Gamma(A) \times \Gamma(W) \Arrow \Gamma(W)$ to be
the $A$-connection given by
\begin{equation}\label{pull_con}
\nabla^{\scriptscriptstyle T}_a w := \nabla_{T(a)} w
\end{equation}
for $a \in \Gamma(A)$ and $w \in \Gamma(W)$ and $T^*K \in \Omega^2(A; \operatorname{Hom}(W_1, W_0))$ by
\begin{equation}
T^*K (a_1, a_2) = K(T(a_1), T(a_2)), \,\, (a_1, a_2) \in A\times_M A.
\end{equation}

\begin{lemma}
 The triple $(\partial, \nabla^T, T^*K)$ defines structure operators for a representation up to homotopy of $A$ on $W$ which is called the pullback of $W$ by $T$ and it is denoted by $T^{!}W \in \Rep^2(A)$.
\end{lemma}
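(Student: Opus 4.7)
The plan is to verify the three defining conditions of a $2$-term representation up to homotopy for the triple $(\partial,\nabla^T,T^*K)$. The overarching strategy is to reduce each axiom to the corresponding axiom for $(\partial,\nabla,K)$ by exploiting the two defining properties of the Lie algebroid morphism $T$: anchor-preservation $\rho_{A'}\circ T=\rho_A$ and bracket-preservation $T[a_1,a_2]_A=[T(a_1),T(a_2)]_{A'}$. Anchor-preservation is already needed to ensure that $\nabla^T$ defined by \eqref{pull_con} satisfies the Leibniz rule and is thus a genuine $A$-connection.

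The first axiom, $\partial\circ\nabla^{T,0}=\nabla^{T,1}\circ\partial$, is immediate since $\nabla$ is $\partial$-compatible and $\partial$ is unchanged in the pullback. For the curvature axiom, I would first prove the key identity
\begin{equation*}
R_{\nabla^T}(a_1,a_2) = R_\nabla(T(a_1),T(a_2))
\end{equation*}
for $a_1,a_2\in\Gamma(A)$, which follows directly from the definition of $\nabla^T$ together with bracket-preservation of $T$. The two diagonal curvature relations in the diagram for $(\partial,\nabla^T,T^*K)$ then follow by evaluating the corresponding diagram for $(\partial,\nabla,K)$ at $(T(a_1),T(a_2))$.

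The third axiom, $d_{\nabla^{T,\End}}(T^*K)=0$, is where the main obstacle lies. I would prove it via the more general intertwining property
\begin{equation*}
d_{\nabla^{T,\End}}\circ T^* \,=\, T^*\circ d_{\nabla^{\End}}
\end{equation*}
on $\uEnd(W)$-valued forms, which I would verify by expanding both sides via Koszul's formula for the covariant exterior derivative and using bracket-preservation of $T$ to match the corresponding terms. Applying this identity to $K$ and using $d_{\nabla^{\End}}K=0$ then yields the desired vanishing. Everything else is routine; the only nontrivial piece of bookkeeping is this intertwining identity, and even that is a direct consequence of the Lie algebroid morphism axioms.
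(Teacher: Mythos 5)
Your argument is correct: each of the three axioms for $(\partial,\nabla^T,T^*K)$ does reduce to the corresponding axiom for $(\partial,\nabla,K)$ exactly as you describe, with anchor-preservation giving the Leibniz rule for $\nabla^T$ and bracket-preservation giving both $R_{\nabla^T}(a_1,a_2)=R_\nabla(Ta_1,Ta_2)$ and the intertwining of $T^*$ with the covariant exterior derivatives. The paper offers no proof at all here (it reads ``We leave the details to the reader''), so there is nothing to compare against; your writeup simply supplies the routine verification the authors intend.
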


\begin{proof}
We leave the details to the reader.
\end{proof}

\begin{example}
If $T$ is the inclusion of a Lie subalgebroid $A \hookrightarrow
A'$, the pullback $T^{\,!} W$ is just the restriction of the
representation to $A$.
\end{example}

The usefulness of taking pullbacks is that it allows one to define
morphisms between representations up to homotopy of different
algebroids.

\begin{definition}
  Let $W \in \R \mathrm{ep}^{2}(A')$ and $V \in \R
  \mathrm{ep}^{2}(A)$ be representations up to homotopy. We
  define a morphism $(A, V) \Rightarrow (A', W)$ over a Lie algebroid
  morphism $T: A \Arrow A'$ to be an usual morphism $(A, V)
  \Rightarrow (A, T^{\,!}W)$ (as given in Definition \ref{rep_morf}).
\end{definition}

\begin{remark}\em
The pullback operation can be defined for arbitrary representations up to homotopy. It was already defined in this generality for representations up   to homotopy of Lie groupoids in \cite{ArCr12}. Also, the pullback
can be extended to morphisms and we get a functor $T^{\,!}: \R  \mathrm{ep}^{2}(A') \Arrow \R \mathrm{ep}^{2}(A)$.
\end{remark}


\section{Double vector bundles.}

We briefly recall the definitions of double vector bundles,
of some of their special sections and of their morphisms. We refer to \cite{Mackenzie05} for a more
detailed treatment (see also \cite{GrMe10a} for a treatment closer to
ours). We also classify subbundles of double
vector bundles.

\subsection{Preliminaries.}
\begin{definition}
A double vector bundle is a commutative square
$$
\begin{CD}
D @>q_B^D>> B\\
@Vq_A^DVV   @VVq_BV\\
A @>>q_A> M
\end{CD}
$$
satisfying the following three conditions:
\begin{itemize}
\item[DV1.] all four sides are vector bundles;
\item[DV2.] $q_B^D$ is a vector bundle morphism over $q_A$;
\item[DV3.] $\soma{B}: D\times_B D \rightarrow D$ is a vector bundle
  morphism over $+: A\times_M A \rightarrow A$, where $\soma{B}$ is
  the addition map for the vector bundle $D\rightarrow B$.
\end{itemize}
\end{definition}

Given a double vector bundle $(D; A, B; M)$, the vector bundles $A$ and $B$ are called
the \textit{side bundles}. The zero sections are denoted by
$\rightexp{0}{A}: M \rightarrow A$, $\rightexp{0}{B}: M \rightarrow
B$, $\leftexp{A}{0}: A \rightarrow D$ and $\leftexp{B}{0}: B
\rightarrow D$. Elements of $D$ are written $(d; a, b; m)$,
where $d \in D$, $m\in M$ and $a=q_A^D(d) \in A_m$, $b=q_B^D(d)\in
B_m$.

The \textit{core} $C$ of a double vector bundle is the intersection of the kernels of
$q_A^D$ and $q_B^D$. It has a natural vector bundle structure over
$M$, the projection of which we call $q_C: C \rightarrow M$. The
inclusion $C \hookrightarrow D$ is usually denoted by
$$
C_m \ni c \longmapsto \overline{c} \in (q_A^D)^{-1}(\rightexp{0}{A}_m) \cap (q_B^{D})^{-1}(\rightexp{0}{B}_m).
$$

\begin{definition}
  Let $(D; A, B; M)$ and $(D'; A', B'; M)$ be two double vector
  bundles. A double vector bundle morphism $(F; F_{\ver}, F_{\hor}; f)$ from $D$ to
  $D'$ is a commutative cube
$$
\xy
(0,0)*+^{A}="A"; (18,8)*+^{M}="M_1"; (30,0)*+^{A'}="A'";(48,8)*+^{M}="M_2";
(0,20)*+^{D}="D"; (18,28)*+^{B}="B_1"; (30,20)*+^{D'}="D'";(48,28)*+^{B'}="B'";
{\ar@{->}"A"; "M_1"}; 
{\ar@{->}_{F_{\ver}}"A"; "A'"};
{\ar@{->}^{f}"M_1"; "M_2"};
{\ar@{->}"A'"; "M_2"};
{\ar@{->}"D"; "B_1"}; 
{\ar@{->}^{F}"D";"D'"};
{\ar@{->}^{F_{\hor}}"B_1"; "B'"};
{\ar@{->}"D'"; "B'" };
{\ar@{->}"D"; "A"}; 
{\ar@{->}"B_1";"M_1"};
{\ar@{->}"D'"; "A'"};
{\ar@{->}"B'"; "M_2"};
\endxy
$$
where all the faces are vector bundle morphisms. 
\end{definition}

Given a double vector bundle morphism $(F; F_{\ver}, F_{\hor}; f)$, its restriction to
the core bundles induces a vector bundle morphism $F_{c}: C \Arrow
C'$. In the following, we are mainly interested in double vector bundle morphisms where
$f=\mathrm{id}_M: M \Arrow M$. In this case, we omit the reference to
$f$ and denote a double vector bundle morphism by $(F; F_{\ver}, F_{\hor})$.

\medskip

Given a double vector bundle $(D;A,B;M)$, the space of sections $\Gamma(B, D)$ is
generated as a $C^{\infty}(B)$-module by two distinguished classes of
sections (see \cite{Mackenzie11}), the \textit{linear} and the
\textit{core sections} which we now describe.

\begin{definition}
  For a section $c: M \rightarrow C$, the corresponding \textit{core
    section} $\hat{c}: B \rightarrow D$ is defined as
\begin{equation}\label{core_section}
\hat{c}(b_m) = \leftexp{B}{0}_{\vphantom{1}_{b_m}} \soma{A} \overline{c(m)}, \,\, m \in M, \, b_m \in B_m.
\end{equation}
\end{definition}

We denote the space of core sections by $\Gamma_c(B, D)$.

\begin{definition}
  A section $\X \in \Gamma(B, D)$ is called \textit{linear} if $\X: B
  \rightarrow D$ is a bundle morphism from $B \rightarrow M$ to $D
  \rightarrow A$. The space of linear sections is denoted by
  $\Gamma_{\ell}(B, D)$.
\end{definition} 

The space of linear sections is a locally free $C^{\infty}(M)$-module
(see e.g.~\cite{GrMe10a}). Hence, there is a vector bundle $\widehat{A}$
over $M$ such that $\Gamma_{\ell}(B, D)$ is isomorphic to
$\Gamma(\widehat{A})$ as $C^{\infty}(M)$-modules. Note that for a
linear section $\X$, there exists a section $\X_0: M \rightarrow A$
such that $q_A^D \circ \X = \X_0 \circ q_B$. The map $\X \mapsto \X_0$
induces a short exact sequence of vector bundles
\begin{equation}\label{fat_seq}
0 \longrightarrow B^*\otimes C \longhookrightarrow \widehat{A} \longrightarrow A \longrightarrow 0,
\end{equation}
where for $T \in \Gamma(B^*\otimes C)$, the corresponding section
$\widehat{T} \in \Gamma_{\ell}(B, D)$ is given by
\begin{equation}\label{core_morf}
\widehat{T}(b_m) = \leftexp{B}{0}_{b_m} \soma{A} \overline{T(b_m)}.
\end{equation}
We call splittings $h: A \rightarrow \widehat{A}$ of the short exact
sequence \eqref{fat_seq} \textit{horizontal lifts}.

\begin{example}\label{trivial_dvb}
  Let $A, \, B, \, C$ be vector bundles over $M$ and consider
  $D=A\oplus B \oplus C$. With the vector bundle structures
  $D=q^{!}_A(B\oplus C) \Arrow A$ and $D=q_B^{!}(A\oplus C) \Arrow B$,
  one has that $(D; A, B; M)$ is a double vector bundle called the
  \textit{trivial double vector bundle
    with core $C$}. The core sections are given by
$$
b_m \mapsto (\rightexp{0}{A}_m, b_m, c(m)), \text{ where } m \in M, \, b_m \in B_m, \, c \in \Gamma(C).
$$
The space of linear sections $\Gamma_{\ell}(B, D)$ is naturally
identified with $\Gamma(A)\oplus \Gamma(B^*\otimes C)$ via
$$
(a, T): b_m \mapsto (a(m), b_m, T(b_m)), \text{ where } T \in \Gamma(B^*\otimes C), \, a\in \Gamma(A).
$$
The canonical inclusion $\Gamma(A)
\hookrightarrow \Gamma_{\ell}(B,D)$ is a horizontal lift.

Let $A', B', C'$ be another triple of vector bundles over $M$ and
consider the corresponding trivial double vector bundle with core $C'$, $D'=A'\oplus
B'\oplus C'$. Any double vector bundle morphism $(F; F_{\ver}, F_{\hor})$ from $D$ to
$D'$ is given by
\begin{equation}\label{trivial_morf}
(a, b, c) \mapsto (F_{\ver}(a), F_{\hor}(b), F_{c}(c) + \Phi_a(b))
\end{equation}
where $F_{c}: C \Arrow C'$ is a vector bundle morphism and $\Phi \in \Gamma(A^* \otimes B^* \otimes C')$.
\end{example}
\vspace{5pt}

A \textit{decomposition} for a double vector bundle $(D; A, B; M)$ is an isomorphism
$\sigma$ of double vector bundles from the trivial double vector bundle with core $C$ to $D$ covering
the identities on the side bundles $A, \, B$ and inducing the identity
on the core $C$. The space of decompositions for $D$ will be denoted
by $\dec{D}$. We recall now how this is an affine space over
$\Gamma(A^*\otimes B^*\otimes C)$. Given an element $\Phi \in
\Gamma(A^*\otimes B^*\otimes C)$, consider the double vector bundle morphism
\begin{equation}\label{I_nu}
\begin{array}{lccl}
I_{\Phi}:& A\oplus B \oplus C & \lrrow & A\oplus B \oplus C\\
        & (a,b,c) & \longmapsto & (a, b, c +\Phi_{a}(b))\\
\end{array}
\end{equation}
obtained from \eqref{trivial_morf} by taking $F_{\ver}, \, F_{\hor},
\, F_{c}$ to be the identity morphisms. For a decomposition $\sigma$,
\begin{equation}\label{affine_str}
\Phi \cdot \sigma := \sigma \circ I_{\Phi}
\end{equation}
defines the affine structure on $\dec{D}$. 

\begin{remark}\em
  The space of horizontal lifts is also affine over
  $\Gamma(A^*\otimes B^*\otimes C)$ (this follows directly from the
  definition of horizontal lifts). There is a natural one-to-one
  correspondence between decompositions and horizontal lifts for $D$
  \cite{GrMe10a}. Concretely, given a horizontal lift $h$, the
  decomposition $\sigma_h: A\oplus B \oplus C \Arrow D$ is given by
\begin{equation}\label{decomp}
  \sigma_h (a_m, b_m, c_m) = h(a)(b_m) \soma{B} (\leftexp{B}{0}_{\vphantom{1}_{b_m}} \soma{A} \overline{\,c_m}),
\end{equation}
where $m \in M$ and $a\in \Gamma(A)$ is any section with
$a(m)=a_m$. Conversely, given a decomposition $\sigma: A\oplus B\oplus
C \Arrow D$, the map $h_{\sigma}: \Gamma(A) \Arrow
\Gamma_{\ell}(B,D)$,
\begin{equation}\label{hor_decomp}
h_{\sigma}(a) (b_m) = \sigma(a(m), b_m, 0^C_m), \,\, m \in M,
\end{equation}
is a horizontal lift. The map $h \mapsto \sigma_h$ and its inverse
$\sigma \mapsto h_{\sigma}$ are affine.

\end{remark}

\begin{example}\label{tang_double} For a vector bundle $B \Arrow M$, 
$$
\begin{CD}
TB @>>> B\\
@VVV   @VVV\\
TM @>>> M
\end{CD}
$$
is a double vector bundle with core bundle $B \Arrow M$. The core section corresponding
to $b \in \Gamma(B)$ is the vertical lift $b^{\uparrow}: B \Arrow
TB$. One has that
$$
b^{\uparrow}(\ell_{\psi}) = \langle \psi, b \rangle \circ q_B \,\,\,\,\text{and} \,\,\,\,\, b^{\uparrow}(f \circ q_B) = 0,
$$
where $\ell_{\psi}, f\circ q_B \in C^{\infty}(B)$ are the linear
function and the pullback function corresponding to $\psi \in
\Gamma(B^*)$ and $f \in C^{\infty}(M)$, respectively. An element of
$\Gamma_{\ell}(B, TB)$ is called a \textit{linear vector field}. It is
well-known (see e.g.~\cite{Mackenzie05}) that a linear vector field $X:
B \Arrow TB$ covering $x: M \Arrow TM$ corresponds to a derivation
$L: \Gamma(B^*) \Arrow \Gamma(B^*)$ having $x$ as its symbol. The
precise correspondence is given by
$$
X(\ell_{\psi}) = \ell_{L(\psi)} \,\,\,\, \text{ and }  \,\,\, X(f \circ q_B)= \Lie_x(f) \circ q_B.
$$
Hence, the choice of a horizontal lift for $(TB; TM, B; M)$ is
equivalent to the choice of a connection on $B^*$. For convenience, we
shall prefer working with the dual connection on $B$ (see
\eqref{dual_conn}). In this case, one can identify $\dec{TB}$ with the
space of connections on $B$.
\end{example}

\begin{example} Let $A \Arrow M$ be a vector bundle and consider
  $TA \Arrow TM$ as the horizontal side bundle of the tangent double,
$$
\begin{CD}
TA @>>> TM\\
@VVV   @VVV\\
A @>>> M.
\end{CD}
$$
For any $a \in \Gamma(A)$, $Ta: TM \Arrow TA$ is a linear section
covering $a$ itself.  Yet, the map $a \mapsto Ta$ splits
\eqref{fat_seq} only at the level of sections, as it fails to be
$C^{\infty}(M)$-linear. The choice of a connection $\nabla$ on $A$
restores the $C^{\infty}(M)$-linearity and induces a horizontal lift
by
\begin{equation}
h(a)(x) = Ta(x) \soma{TM} (T0(x) - \overline{\nabla_x a}), \,\, x \in TM, \, a \in \Gamma(A).
\end{equation}
The associated decomposition $\sigma_h \in \dec{TA}$ coincides with
the one induced by $\nabla$ as in Example \ref{tang_double}.
\end{example}


\subsection{Dualization of double vector bundles.}
Given a double vector bundle $(D; A, B; M)$ with core $C$, its
\textit{horizontal dual} is the double vector bundle
\begin{equation}\label{hor_dual}
\begin{CD}
D_B^*@> {p_B} >> B\\
@Vp^{\hor}_{\vphantom{C}_{C^*}}VV     @VVq_BV\\
C^* @>>q_{C^*}>  M,
\end{CD}
\end{equation}
where $p_B: D_B^* \Arrow B$ is the dual of $q_B^D: D\Arrow B$ and, for $\xi \in (p_B)^{-1}(b_m)$,
\begin{equation}\label{C_proj}
  \langle p^{\hor}_{\vphantom{C}_{C^*}}(\xi), c_m \rangle = \langle \xi, \leftexp{B}{0}_{b_m} \soma{A} \overline{c_m} \rangle.
\end{equation}
The core bundle of $D_B^*$ is $A^*\Arrow M$. Similarly, the
\textit{vertical dual} is the double vector bundle
\begin{equation}\label{ver_dual}
\begin{CD}
D_A^*@> p^{\ver}_{\vphantom{C}_{C^*}} >> C^*\\
@Vp_{A}VV     @VVq_{C^*}V\\
A @>>q_{A}>  M
\end{CD}
\end{equation}
with core $B^*\Arrow M$.

In the following, we are mostly interested in the horizontal dual. For
$\psi \in \Gamma(A^*)$, the corresponding core section $\widehat{\psi}
\in \Gamma_c(B, D_B^*)$ is just $(q_A^D)^*\psi$. In particular,
\begin{equation}\label{pair1}
\langle \widehat{\psi}, \,\widehat{c} \rangle = 0 
\end{equation}
for $c \in \Gamma(C)$ and
\begin{equation}\label{pair2}
 \langle \widehat{\psi},\, h(a) \rangle = \langle \psi, a \rangle \circ q_B 
\end{equation}
for $a \in \Gamma(A)$ and  any horizontal lift $h: A \Arrow \widehat{A}$. 

Given a decomposition $\sigma: A\oplus B\oplus C \Arrow D$, the
inverse of its dual over $B$, $(\sigma_B^*)^{-1}: B \oplus C^*\oplus
A^* \Arrow D_B^* $, is a decomposition for $D_B^*$.

\begin{example}
  Let $B \Arrow M$ be a vector bundle and consider its tangent double
  $(TB; TM, B; M)$. The projection of the cotangent bundle $T^*B$ to
  $B^*$ is given, for $\xi \in T_{b_m}^*B$, by
$$
\langle p^{\hor}_{B^*}(\xi), c_m \rangle = \left\langle \xi,
  \left.\frac{d}{dt}\right|_{t=0}(b_m +t c_m) \right\rangle, \,\,
\text{ for } c_m \in B_m, \, m \in M.
$$
Given a decomposition $\sigma: TM\oplus B\oplus B \Arrow TB$, let
$\nabla$ be the corresponding connection on $B$. The inverse of the
dual of $\sigma$ over $B$ induces a horizontal lift $h: \Gamma(B^*)
\Arrow \Gamma_{\ell}(B, T^*B)$ given by
$$
h(\psi)(b_m) = (\sigma_B^*)^{-1}(b_m, \psi(m), 0_m^{TM}) =
d\ell_{\psi}(b_m) - \langle \nabla_{Tq_B(\cdot)} \,\psi, b_m
\rangle\in T_{b_m}^*B,
$$
where $\ell_{\psi} \in C^{\infty}(B)$ is the linear function
corresponding to $\psi \in \Gamma(B^*)$.
\end{example}



\section{$\mathcal{VB}$-algebroids and morphisms.}

Gracia-Saz and Mehta show in \cite{GrMe10a} how representations up to homotopy of a
Lie algebroid on a 2-term graded vector bundle encode the Lie
algebroid structures of \emph{$\VB$-algebroids}. These are double vector bundles with some
additional Lie algebroid structure that is compatible with the double
vector bundle
structure. In this section, we recall this correspondence and show how
it can be extended to morphisms. We also
check in Appendix \ref{dual} that it behaves well under dualization.

\subsection{$\VB$-algebroids.}

We begin with the definition of $\VB$-algebroids. We follow
\cite{GrMe10a} in our treatment of the subject.

\begin{definition}\label{VB_def}
  Let $(D; A, B; M)$ be a double vector bundle. We say that $(D \Arrow B; A \Arrow M)$
  is a $\VB$-algebroid if $D \Arrow B$ is a Lie algebroid, the anchor
  $\rho_D: D \Arrow TB$ is a bundle morphism over $\rho_A: A \Arrow
  TM$ and the three Lie bracket conditions below are satisfied:
\begin{itemize}
\item[(i)] $[\Gamma_{\ell}(B, D), \Gamma_{\ell}(B,D)]_D \subset \Gamma_{\ell}(B,D)$;
\item[(ii)] $[\Gamma_{\ell}(B,D), \Gamma_c(B,D)]_D \subset \Gamma_c(B,D)$;
\item[(iii)] $[\Gamma_c(B,D), \Gamma_c(B,D)]_D = 0$.
\end{itemize}
\end{definition}

A $\VB$-algebroid structure on $(D; A, B; M)$ naturally induces a Lie
algebroid structure on $A$ by taking the anchor to be $\rho_A$ and the
Lie bracket $[\cdot, \cdot]_A$ defined as follows: if $\X, \Y \in
\Gamma_{\ell}(B,D)$ cover $\X_0, \Y_0 \in \Gamma(A)$ respectively,
then $[\X, \Y]_D \in \Gamma_{\ell}(B,D)$ covers $[\X_0, \Y_0]_A \in
\Gamma(A)$. We call $A$ \textit{the base Lie algebroid} of $D$.

The next result from \cite{GrMe10a} relates VB-algebroid structures on
trivial double vector bundles and representations up to homotopy. 
Note that Arias Abad and Crainic show 
a related result on the relationship between representations up to homotopy and Lie algebroid
extensions \cite[Proposition 3.9.]{ArCr12}.

\begin{proposition}\label{rep_vb}
  Let $(A, \rho_A, [\cdot,\cdot]_A)$ be a Lie algebroid over $M$. Let
  $B \Arrow M$ and $C \Arrow M$ be vector bundles. There is a
  one-to-one correspondence between $\VB$-algebroid structures on the
  trivial double vector bundle $A\oplus B\oplus C$ with core $C$ and $A$ as side Lie algebroid, and
  2-term representations up to homotopy of $A$ on $V=C_{[0]}\oplus
  B_{[1]}$.
\end{proposition}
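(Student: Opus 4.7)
The approach is to extract the structure operators $(\partial, \nabla, K)$ of a representation up to homotopy on $V = C_{[0]} \oplus B_{[1]}$ from a given VB-algebroid structure on the trivial DVB, working relative to the canonical decomposition of Example \ref{trivial_dvb}, so that linear sections are identified with $\Gamma(A) \oplus \Gamma(B^* \otimes C)$ and core sections with $\Gamma(C)$. The bundle map $\partial: C \Arrow B$ is obtained by restricting the anchor $\rho_D: D \Arrow TB$ to the core bundles: this is automatic because any DVB morphism sends cores to cores, and the core of $TB$ is $B$. The degree-zero part $\nabla^0$ of the connection arises from condition (ii) of Definition \ref{VB_def}: for $a \in \Gamma(A)$ regarded as a linear section via the trivial decomposition and $c \in \Gamma(C)$, the bracket $[a, \hat c]_D$ is a core section and hence determines $\nabla^0_a c \in \Gamma(C)$. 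The degree-one part $\nabla^1$ on $B$ comes from the observation that $\rho_D(a)$ is a linear vector field on $B$, corresponding by Example \ref{tang_double} to a derivation of $\Gamma(B^*)$ with symbol $\rho_A(a)$; the map $a \mapsto L_a$ turns out to be $C^\infty(M)$-linear and hence defines an $A$-connection on $B^*$, whose dual is $\nabla^1$. Finally, condition (i) forces $[a_1, a_2]_D - \widehat{[a_1, a_2]_A}$ to be a linear section covering zero, hence an element of $\Gamma(B^* \otimes C)$; antisymmetry and $C^\infty(M)$-bilinearity in $a_1, a_2$ yield the curvature term $K \in \Omega^2(A, \Hom(B, C))$.

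For the converse direction, I would reverse these formulas to define the anchor and Lie bracket of $D \Arrow B$ first on linear and core sections (setting $[\hat c_1, \hat c_2]_D = 0$ in accordance with condition (iii)), and then extend by the Leibniz rule to all sections of $D \Arrow B$. The verification that this yields a Lie algebroid structure reduces to checking Jacobi on triples of generators, which splits into three nontrivial cases matching precisely the three conditions defining a representation up to homotopy: the compatibility $\partial \circ \nabla^0 = \nabla^1 \circ \partial$ corresponds to $\rho_D$ being a bracket morphism on mixed pairs of linear and core sections; the commutative diagram relating $R_{\nabla^0}$, $R_{\nabla^1}$ and $K$ via $\partial$ corresponds to Jacobi for two linear sections and one core section; and $d_{\nabla^{\scriptscriptstyle \End}} K = 0$ corresponds to Jacobi for three linear sections.

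The main obstacle is the bookkeeping in this last Jacobi case, where the bracket $[a_1, a_2]_D$ contributes both the $A$-bracket and the $K$-term, so that expanding the cyclic sum of $[[a_1, a_2]_D, a_3]_D$ produces contributions coming from $d_{\nabla^{\scriptscriptstyle \End}} K$ together with terms coming from the connection $\nabla$ acting on elements of $\Gamma(B^* \otimes C)$; matching signs with the paper's convention for $K$ requires care. Once all three Jacobi cases are verified, showing that the two constructions are mutually inverse is immediate, since a VB-algebroid bracket on the trivial DVB is completely determined by its action on the generating classes of linear and core sections, and the structure operators are defined precisely so as to recover those values.
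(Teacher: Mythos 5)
Your proposal is correct and follows the same route the paper takes: the paper does not prove Proposition \ref{rep_vb} itself (it is quoted from \cite{rajan}) but records the correspondence explicitly via \eqref{D_anchor}, \eqref{c} and \eqref{curvature}, and your extraction of $(\partial,\nabla,K)$ from the anchor on cores, the bracket of a lifted section with a core section, and the defect $[h(a_1),h(a_2)]_D-h([a_1,a_2]_A)$ reproduces exactly those formulas. Your accounting of the converse is the standard verification from \cite{rajan}; the only minor imprecision is that the square $R_{\nabla^1}=-\partial\circ K$ of the commutative diagram comes from compatibility of $\rho_D$ with the bracket on pairs of linear sections rather than from the Jacobi identity with one core section (which yields the other square, $R_{\nabla^0}=-K\circ\partial$).
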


Let us give an explicit description of the $\VB$-algebroid structure
on $D=A\oplus B\oplus C$ corresponding to a 2-term representation
$(\partial, \nabla, K)$ of $A$ on $C_{[0]}\oplus B_{[1]}$. For $a \in
\Gamma(A)$, let $h: \Gamma(A) \hookrightarrow \Gamma_{\ell}(B, D)$ be
the canonical inclusion of Example \ref{trivial_dvb}.  Define as
follows the
anchor of $D$, $\rho_D: D\Arrow B$, on linear and core sections:
\begin{equation}\label{D_anchor}
\rho_D(h(a)) = X_{\nabla_a^1}, \,\, \rho_D(\widehat{c}) = \partial(c)^{\uparrow},
\end{equation}
where $X_{\nabla_a^1}, \, \partial(c)^{\uparrow} \in \mathfrak{X}(B)$
are, respectively, the linear vector field corresponding to the
derivation ${\nabla_a^1}^*: \Gamma(B^*) \Arrow \Gamma(B^*)$ and the vertical
vector field corresponding to $\partial(c) \in \Gamma(B)$ (see Example
\ref{tang_double}). The Lie bracket $[\cdot, \cdot]_D$ on $\Gamma(D)$
is given by the formulas below:
$$
[\widehat{c}_1, \widehat{c}_2]_D = 0
$$
\begin{equation}\label{c}
[h(a), \widehat{c}\,]_D = \widehat{\nabla^0_a \,c}, 
\end{equation}
and
\begin{equation}\label{curvature}
[h(a_1), h(a_2)]_D =  h([a_1, a_2]_A)  + \widehat{K}(a_1, a_2)
\end{equation}
where $a, a_1, a_2 \in \Gamma(A)$ and $c, c_1, c_2 \in \Gamma(C)$ and
$\widehat{K}(a_1, a_2) \in \Gamma_{\ell}(B, D)$ is the linear section
given by \eqref{core_morf}.

\begin{remark}\label{gen_remark}\em
  A $\VB$-algebroid structure on a general double vector bundle $(D;
  A, B; M)$ induces a representation up to homotopy of the base Lie
  algebroid $A$ on $C_{[0]}\oplus B_{[1]}$ once a decomposition
  $\sigma: A\oplus B\oplus C \Arrow D$ is chosen. The structure
  operators $(\partial, \nabla, K)$ are obtained from exactly the same
  formulas \eqref{D_anchor}, \eqref{c} and \eqref{curvature} by taking
  $h: \Gamma(A) \Arrow \Gamma_{\ell}(B, D)$ as the horizontal lift
  corresponding to $\sigma$. The isomorphism class of this
  representation does not depend on the choice of the
  decomposition. More precisely, if $\widetilde{\sigma}$ is another
  decomposition, then $\widetilde{\sigma}= \Phi \cdot \sigma$, for
  some $\Phi \in \Gamma(A^*\otimes B^*\otimes C)$ and the structure
  operators $(\widetilde{\partial}, \widetilde{\nabla},
  \widetilde{K})$ corresponding to $\widetilde{\sigma}$ are given by
\begin{equation}\label{new1}
\widetilde{\partial}=\partial;
\end{equation}
\begin{equation}\label{new2}
  \widetilde{\nabla}^0_a = \nabla^0_a - \Phi_a \circ \partial \text{ and }  \widetilde{\nabla}^1_a = \nabla^1_a - \partial \circ \Phi_a;
\end{equation}
\begin{equation}\label{new3}
\widetilde{K}(a,b) = K(a,b) + d_{\nabla^{\Hom}}\Phi(a,b) + \Phi_b\circ \partial \circ \Phi_a - \Phi_a\circ \partial \circ \Phi_b,
\end{equation}
for $a, b \in \Gamma(A)$. Moreover,
$(\mathrm{id}_C, \mathrm{id}_B, \Phi)$ are the components of an
$\Omega(A)$-linear isomorphism $\Omega(A, C_{[0]}\oplus B_{[1]})
\Arrow \Omega(A, C_{[0]}\oplus B_{[1]})$ which intertwines
$\mathcal{D}$ and $\widetilde{\mathcal{D}}$ (see \cite{GrMe10a} for more
details).
\end{remark}

The next two Examples recall how the double and the adjoint
representation arise in this way from $\VB$-algebroids.

\begin{example}\label{double_vb}
  The tangent double $(TB; TM; B; M)$ of a vector bundle $B \Arrow M$
  is canonically endowed with a $\VB$-algebroid structure $(TB \Arrow
  B; TM \Arrow M)$ with $\rho_{TB}= \mathrm{id}_B$,
  $\rho_{TM}=\mathrm{id}_{TM}$ and $[\cdot, \cdot]_{TB}$ given by the
  Lie bracket of vector fields. A horizontal lift $h: TM \Arrow
  \widehat{TM}$ is equivalent to a connection $\nabla:
  \Gamma(TM)\times \Gamma(B) \Arrow \Gamma(B)$ (see Example
  \ref{tang_double}). Equations \eqref{D_anchor} and \eqref{c} imply
  the equality $\partial= \mathrm{id}_B$ and show that the connection
  on $B_{[0]}\oplus B_{[1]}$ is given by $\nabla$ in degree $0$ and
  $1$. The equality $K = - R_{\nabla}$, with $R_\nabla$ the curvature
  of $\nabla$, follows from \eqref{curvature}. Hence, the element on
  $\mathrm{Rep}^{2}(TM)$ associated to $(TB\Arrow B; TM \Arrow M)$ is
  the isomorphism class of the double representation of $TM$ on
  $B\oplus B$ (see Example \ref{double}).
\end{example}

\begin{example}\label{adjoint_vb}
  Let $(A, \rho_A, [\cdot, \cdot]_A)$ be a Lie algebroid over $M$. The
  \emph{tangent prolongation} $(TA; A, TM; M)$ of $A$ has a $\VB$-algebroid
  structure $(TA \Arrow TM; A \Arrow M)$. We refer to \cite{Mackenzie05}
  for more details about this. Gracia-Saz and Mehta show that the element on
  $\mathrm{Rep}^{2}(A)$ associated to such a $\VB$-algebroid structure
  is exactly the adjoint representation of $A$ \cite{GrMe10a}.
\end{example}

Given a $\VB$-algebroid $(D \Arrow B; A \Arrow M)$, one can prove (see
\cite{Mackenzie11}) that the vertical dual $(D_A^* \Arrow C^*; A \Arrow
M)$ is a $\VB$-algebroid. By choosing a decomposition $\sigma \in
\dec{D}$, the inverse of its dual over $A$, $(\sigma_A^*)^{-1}$, is a
decomposition for $D_A^*$. In Appendix \ref{dual}, we prove that the
representations up to homotopy associated to $\sigma$ and
$(\sigma_A^*)^{-1}$ are dual to each other.

\begin{example}
  Let $(A, [\cdot, \cdot]_A, \rho_A)$ be a Lie algebroid over $M$. By
  Proposition \ref{dual_prop}, the $\VB$-algebroid structure of $(T^*A
  \Arrow A^*; A \Arrow M)$ obtained from taking the vertical dual of
  the tangent prolongation $(TA \Arrow TM; A \Arrow M)$ gives rise to
  the coadjoint representation $\mathrm{ad}^{\top} \in \Rep^2(A)$, the
  isomorphism class of the representation up to homotopy dual to the
  adjoint representation of $A$. We refer to \cite{Mackenzie05} for
  more details concerning the \textit{cotangent Lie algebroid} $T^*A
  \Arrow A^*$.
\end{example}


\subsection{Lie algebroid differential.}
Let $(A, \rho_A, [\cdot, \cdot]_A)$ be a Lie algebroid over $M$. Given
a 2-term representation up to homotopy of $A$ on $V= C_{[0]}\oplus
B_{[1]}$, we investigate how the Lie algebroid differential $d_D$ of
$D \Arrow B$, where $D=A\oplus B\oplus C$, is related to the structure
operators $(\partial, \nabla, K)$.
 Let $h: \Gamma(A) \hookrightarrow
\Gamma_{\ell}(B, D)$ be the natural inclusion considered in Example
\ref{trivial_dvb}.

First of all, it is straightforward to check that, for $f \in C^{\infty}(M)$, 
\begin{equation}\label{d_pull}
d_D(f\circ q_B)= (\rho_A\circ q_A^D)^*(df)= q_A^{D\, *}(d_A f),
\end{equation}
where $d_A$ is the Lie algebroid differential of $A$.

In the following, recall the identification $\Gamma_{\ell}(B, D^*_B) =
\Gamma(A^* \otimes B^*)\oplus \Gamma(C^*)$ (see Example
\ref{trivial_dvb}).

\begin{lemma}\label{d0_linear}
  Let $\ell_{\psi} \in C^{\infty}(B)$ be the linear function
  associated to $\psi \in \Gamma(B^*)$. The map $d_D(\ell_{\psi}): B
  \rmap D_B^*$ is a linear section given by
\begin{equation}\label{linear_zero}
d_D(\ell_{\psi}) = (d_{\nabla^*} \psi, \partial^* \psi)
\end{equation}
where $\nabla^*$ is the $A$-connection on $V^*=B^*_{[0]}\oplus
C^*_{[1]}$ dual to $\nabla$ and $d_{\nabla^*}: \Omega(A; V^*) \Arrow
\Omega(A;V^*)$ is the Koszul differential.
\end{lemma}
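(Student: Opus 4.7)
The plan is to identify both sides of \eqref{linear_zero} by pairing against the generators of $\Gamma(B, D)$. Since $\ell_{\psi}$ is a function on $B$, the defining property of the Lie algebroid differential yields $\langle d_D(\ell_{\psi}), X\rangle = \rho_D(X)(\ell_{\psi})$ for every $X \in \Gamma(B, D)$. As $\Gamma(B, D)$ is generated as a $C^{\infty}(B)$-module by the linear sections $h(a)$, $a \in \Gamma(A)$, and the core sections $\widehat{c}$, $c \in \Gamma(C)$, it suffices to verify the identity on these two distinguished classes.

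First, I would use \eqref{D_anchor} together with Example \ref{tang_double} to compute the pairing on the left-hand side. Since $\rho_D(h(a)) = X_{\nabla_a^1}$ is the linear vector field on $B$ corresponding to the derivation $(\nabla_a^1)^*$ of $\Gamma(B^*)$, one gets $\rho_D(h(a))(\ell_{\psi}) = \ell_{\nabla_a^* \psi}$. For the core generators, $\rho_D(\widehat{c}) = \partial(c)^{\uparrow}$ is the vertical lift of $\partial(c) \in \Gamma(B)$, so $\rho_D(\widehat{c})(\ell_{\psi}) = \langle \psi, \partial(c)\rangle \circ q_B = \langle \partial^*\psi, c\rangle \circ q_B$.

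Next, I would evaluate the right-hand side $(d_{\nabla^*}\psi, \partial^*\psi)$ against the same generators using the linear-section description of $D_B^*$ from Example \ref{trivial_dvb} applied to the horizontal dual (with side bundles $C^*, B$ and core $A^*$). Under the identification $\Gamma_{\ell}(B, D_B^*) = \Gamma(A^*\otimes B^*) \oplus \Gamma(C^*)$, a linear section $(S, \xi)$ pairs with $h(a)$ as $\ell_{S(a)}$ and with $\widehat{c}$ as $\langle \xi, c\rangle \circ q_B$, as one readily checks from \eqref{pair1}, \eqref{pair2} and the form of linear sections in Example \ref{trivial_dvb}. Specializing to $(S, \xi) = (d_{\nabla^*}\psi, \partial^*\psi)$ and using the Koszul formula $d_{\nabla^*}\psi(a) = \nabla_a^* \psi$ valid on $0$-forms recovers exactly the expressions found on the left-hand side.

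The proof then concludes by $C^{\infty}(B)$-linearity of the pairing between $\Gamma(B, D_B^*)$ and $\Gamma(B, D)$: two sections which agree on a generating set of $\Gamma(B, D)$ must coincide. In particular, this argument simultaneously establishes that $d_D(\ell_{\psi})$ is a linear section of $D_B^* \to B$ and identifies it with the stated expression. The only real subtlety is keeping track of the identifications of sections and pairings for the horizontal dual, but once Example \ref{trivial_dvb} is applied to $D_B^*$ the remaining computations are routine.
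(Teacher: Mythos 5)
Your proposal is correct and follows essentially the same route as the paper: both pair $d_D(\ell_{\psi})$ against the core sections $\widehat{c}$ and the lifts $h(a)$, use \eqref{D_anchor} and Example \ref{tang_double} to get $\langle\partial^*\psi,c\rangle\circ q_B$ and $\ell_{\nabla_a^*\psi}$, and read off the components. The only cosmetic difference is that the paper first proves linearity of $d_D(\ell_\psi)$ separately (via $d\ell_\psi$ being a linear section of $T^*B$ and $\rho_D$ a DVB morphism), whereas you obtain it as a byproduct of matching against the explicitly linear section $(d_{\nabla^*}\psi,\partial^*\psi)$ on a generating set; both are valid.
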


\begin{proof}
  The result that $d_D(\ell_{\psi})$ is a linear section follows from
  the fact that $d\ell_{\psi}: B \rmap T^*B$ is a linear section of
  the cotangent bundle (covering $\psi$ itself) and $\rho_D: D \Arrow
  TB$ is a double vector bundle morphism. Also, for $b_m \in B$, one has
\begin{align*}
  \langle p^{\hor}_{C^*}(d_D\ell_{\psi}), c_m \rangle =
  d\ell_{\psi}(\rho_D(\tilde{0}_{b_m} \soma{A} \overline{c_m} )) =
  \left.\frac{d}{dt}\right|_{t=0} \langle \psi(m), b_m +
  t \partial(c_m) \rangle = \langle \partial^*\psi(m), c_m \rangle.
\end{align*}
Finally, since $\rho_D(h(a))$ is the linear vector field on $B$ which
corresponds to the derivation $\nabla_a$ on $\Gamma(B)$, it follows
that
$$
\langle d_D(\ell_{\psi}), h(a) \rangle =
\Lie_{\rho_D(h(a))}\ell_{\psi} = \ell_{\nabla^{*}_a \, \psi}.
$$
Formula \eqref{linear_zero} follows immediately.
\end{proof}

Now let us consider the degree one part of $d_D$ (i.e.~$d_D:
\Gamma(D_B^*)\Arrow \Gamma(\wedge^2 D_B^*)$). As $D=q_B^!(A\oplus C)$
as a vector bundle over $B$, one has
\begin{equation}\label{wedge_decomp}
\wedge^2 D^*_B = q^{!}_B(\wedge^2 A^* \oplus (A^*\otimes C^*) \oplus \wedge^2 C^*).
\end{equation}

\begin{lemma}
  Choose $\psi\in\Gamma(A^*)$ and consider the corresponding core section
  $\widehat{\psi} \in \Gamma_c(B,D)$. With respect to the
  decomposition \eqref{wedge_decomp}, one has
\begin{equation}\label{psi_eq}
  d_D\widehat{\psi}: b_m \longmapsto (\,d_A \psi(m), \,0^{\scriptscriptstyle A^*\otimes C^*}_m , \, 0^{\scriptscriptstyle \wedge^2 C^*}_m).
\end{equation}
\end{lemma}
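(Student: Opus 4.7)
The plan is to verify the formula by evaluating the 2-form $d_D\widehat{\psi}$ on pairs of sections of $D \Arrow B$ that probe each summand of the decomposition \eqref{wedge_decomp}: pairs of linear sections $(h(a_1), h(a_2))$ pair with $\wedge^2 A$ and hence read off the $\wedge^2 A^*$ component, mixed pairs $(h(a), \widehat{c})$ read off the $A^*\otimes C^*$ component, and pairs of core sections $(\widehat{c_1}, \widehat{c_2})$ read off the $\wedge^2 C^*$ component. Since linear and core sections generate $\Gamma(B,D)$ as a $C^{\infty}(B)$-module, this determines $d_D\widehat{\psi}$ uniquely. In each case, I apply the Koszul formula
$$
d_D\widehat{\psi}(u,v) = \Lie_{\rho_D(u)}\langle\widehat{\psi}, v\rangle - \Lie_{\rho_D(v)}\langle\widehat{\psi}, u\rangle - \langle\widehat{\psi}, [u,v]_D\rangle,
$$
using the pairings \eqref{pair1}--\eqref{pair2}, the anchor \eqref{D_anchor}, and the brackets \eqref{c}--\eqref{curvature}.

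For the first case, $\langle \widehat{\psi}, h(a_i)\rangle = \langle\psi, a_i\rangle \circ q_B$ and $\rho_D(h(a_i))$ is a linear vector field covering $\rho_A(a_i) \in \mx(M)$, so the first two Koszul terms yield $(\Lie_{\rho_A(a_1)}\langle\psi, a_2\rangle - \Lie_{\rho_A(a_2)}\langle\psi, a_1\rangle) \circ q_B$. By \eqref{curvature}, $[h(a_1), h(a_2)]_D = h([a_1,a_2]_A) + \widehat{K(a_1,a_2)}$; the core piece pairs trivially with $\widehat{\psi}$ by \eqref{pair1}, leaving only $\langle\psi, [a_1,a_2]_A\rangle\circ q_B$. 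The total reproduces $d_A\psi(a_1,a_2)\circ q_B$, matching the $\wedge^2 A^*$ component of the right-hand side of \eqref{psi_eq}. For the second case, the first term vanishes by \eqref{pair1}; the second term vanishes because $\rho_D(\widehat{c}) = \partial(c)^{\uparrow}$ is vertical on $B$ and annihilates the pullback $\langle\psi, a\rangle \circ q_B$; and the third vanishes since $[h(a), \widehat{c}]_D = \widehat{\nabla^0_a c}$ is a core section. For the third case, all three Koszul terms vanish by \eqref{pair1} and $[\widehat{c_1}, \widehat{c_2}]_D=0$.

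There is no real obstacle beyond organizing the three cases; the key observation is that the three types of section-pairs isolate exactly the three summands in \eqref{wedge_decomp}, so the combined computations determine $d_D\widehat{\psi}(b_m)$ entirely and yield the formula \eqref{psi_eq}.
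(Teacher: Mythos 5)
Your proposal is correct and follows essentially the same route as the paper: evaluating $d_D\widehat{\psi}$ via the Koszul formula on pairs of linear sections, mixed linear--core pairs, and pairs of core sections, and using \eqref{pair1}, \eqref{pair2} together with the verticality of $\rho_D(\widehat{c})$ to kill all terms except $d_A\psi(a_1,a_2)\circ q_B$. The only difference is that you spell out the generation of $\Gamma(B,D)$ as a $C^\infty(B)$-module to justify that these evaluations determine the form, which the paper leaves implicit.
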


\begin{proof}
  Choose $a_1, \,a_2 \in \Gamma(A)$. As $\rho_D:D \Arrow TB$ is a
  vector bundle morphism over $\rho_A: A \Arrow TM$, one has that
  $Tq_B\circ \rho_D(h(a_i)) = \rho_A \circ q^D_A(h(a_i)) =
  \rho_A(a_i)$, for $i=1, 2$. Also, it follows from \eqref{pair2} that
  $\langle \widehat{\psi}, h(a_i) \rangle = \langle \psi, a_i \rangle
  \circ q_B$, for $i=1,2$. It is now straightforward to check that
$$
\begin{array}{rl}
  d_D \widehat{\psi}(h(a_1), h(a_2)) = & (d_A\psi(a_1,a_2))\circ q_B - \langle \widehat{\psi}, \,\widehat{K}(a_1,a_2) \rangle = (d_A\psi(a_1,a_2))\circ q_B.
\end{array}
$$
As for the component on $A^*\otimes C^*$, we have
$$
\begin{array}{rl}
  d_D\widehat{\psi}(h(a_1), \widehat{c}) 
  = & \Lie_{\rho_D(h(a_1))} \langle \widehat{\psi}, \,\widehat{c}
  \rangle -  \Lie_{\rho_D(\widehat{c})} \langle \widehat{\psi}, \,h(a_1)
  \rangle 
  - \langle \widehat{\psi}, \, [h(a_1), \widehat{c}\,]_D \rangle = 0.
\end{array}
$$
The first and the last term on the right hand side  vanish because of
\eqref{pair1}. Also, since $\rho_D(\widehat{c})$ is a vertical vector
field, it follows with \eqref{pair2} that the second term on the right
hand side
vanishes. One can prove in a similar manner that the component on
$\wedge^2 C^*$ is also zero.
\end{proof}

\begin{lemma}
  Let $Q\in \Gamma(A^* \otimes B^*)$ and $\gamma \in \Gamma(C^*)$.With
  respect to the decomposition \eqref{wedge_decomp}, we have
\begin{equation}\label{Q_diff}
  d_D(Q, 0): b_m \longmapsto (\langle d_{\nabla^*} Q, b_m \rangle,  - (\id_{A^*}\otimes \partial^*)(Q), \, 0^{\scriptscriptstyle\wedge^2 C^*}_m ),
\end{equation}
and
\begin{equation}\label{d_gamma}
  d_D(0, \gamma): b_m \longmapsto ( -\langle \, K^*\gamma, b_m \,\rangle, \, d_{\nabla^{*}} \gamma, \, 0^{\scriptscriptstyle\wedge^2 C^*}_m),
\end{equation}
\end{lemma}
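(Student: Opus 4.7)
The plan is to apply the standard Koszul formula for the Lie algebroid differential of $D \to B$,
$$
d_D\omega(s_1, s_2) = \mathcal{L}_{\rho_D(s_1)}\langle \omega, s_2\rangle - \mathcal{L}_{\rho_D(s_2)}\langle \omega, s_1\rangle - \langle \omega, [s_1, s_2]_D\rangle,
$$
and evaluate on the three distinguished types of pairs of sections of $D \to B$: pairs of horizontal lifts $(h(a_1), h(a_2))$, mixed pairs $(h(a), \widehat{c}\,)$, and pairs of core sections $(\widehat{c}_1, \widehat{c}_2)$. The bracket relations \eqref{c}--\eqref{curvature}, together with the anchor formulas \eqref{D_anchor} and the standard pairings between linear/core sections of $D$ and of $D_B^*$, convert each evaluation into an explicit expression in the structure operators $(\partial, \nabla, K)$. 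Matching those expressions against the decomposition \eqref{wedge_decomp} gives the three components of $d_D(Q,0)$ and $d_D(0,\gamma)$.

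Concretely, for $d_D(Q,0)$ I would first note that, under the dual identification, $(Q,0)$ pairs with $h(a)$ as the linear function $\ell_{\iota_a Q}$ on $B$, and pairs trivially with any core section $\widehat{c}$. Then evaluation on $(h(a_1), h(a_2))$ uses $\rho_D(h(a_i)) \ell_{\iota_{a_j}Q} = \ell_{\nabla^*_{a_i}(\iota_{a_j}Q)}$ together with $\langle Q, \widehat{K}(a_1,a_2)\rangle = 0$ (both live on the ``twist'' side), and the Koszul formula collapses to the defining expression of $d_{\nabla^*}Q$. Evaluation on $(h(a), \widehat c)$ gives only the term $-\mathcal{L}_{\partial(c)^\uparrow} \ell_{\iota_a Q} = -\langle \iota_a Q, \partial(c)\rangle\circ q_B$, which identifies the cross component as $-(\mathrm{id}_{A^*}\otimes \partial^*)(Q)$; and evaluation on $(\widehat{c}_1, \widehat{c}_2)$ vanishes by (iii) and because the relevant pairings vanish, giving the $\wedge^2 C^*$-component zero.

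For $d_D(0,\gamma)$ the roles are swapped: $(0,\gamma)$ pairs trivially with $h(a)$ and, with a core section $\widehat c$, gives the pullback function $\langle \gamma, c\rangle \circ q_B$. Evaluation on $(h(a_1), h(a_2))$ reduces via the bracket identity \eqref{curvature} to $-\langle \gamma, K(a_1,a_2)(b_m)\rangle = -\langle (K^*\gamma)(a_1,a_2), b_m\rangle$, giving the $\wedge^2 A^*$-component $-\langle K^*\gamma, b_m\rangle$. Evaluation on $(h(a), \widehat c)$ uses $\mathcal{L}_{\rho_A(a)}\langle \gamma, c\rangle - \langle \gamma, \nabla^0_a c\rangle = \langle \nabla^*_a \gamma, c\rangle$ (the defining relation \eqref{dual_conn} for the dual connection) and the bracket identity \eqref{c}, producing the $A^*\otimes C^*$-component $d_{\nabla^*}\gamma$; and evaluation on $(\widehat c_1, \widehat c_2)$ vanishes because $\rho_D(\widehat c_i)$ is vertical and annihilates the pullback $\langle \gamma, c_j\rangle\circ q_B$, and $[\widehat c_1, \widehat c_2]_D = 0$.

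The main obstacle is bookkeeping rather than ideas: one must pin down the sign and slot conventions for $K^*$ and for the dual connection on $V^* = B^*_{[0]}\oplus C^*_{[1]}$, correctly identify the three summands of $\wedge^2 D_B^*$ in \eqref{wedge_decomp} with the functions they produce on the three kinds of pairs, and verify that the cross components on $A^*\otimes C^*$ come out with the signs claimed. Once these identifications are fixed, each of the six evaluations reduces, via (i)--(iii) and \eqref{pair1}--\eqref{pair2}, to a one-line manipulation.
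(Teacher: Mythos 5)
Your proposal is correct and follows essentially the same route as the paper: evaluate $d_D(Q,0)$ and $d_D(0,\gamma)$ via the Koszul formula on pairs of horizontal lifts, mixed pairs, and pairs of core sections, using the anchor and bracket formulas \eqref{D_anchor}, \eqref{c}, \eqref{curvature} and the pairings $\langle (Q,\gamma), h(a)\rangle = \ell_{Q(a)}$, $\langle (Q,\gamma), \widehat c\,\rangle = \langle \gamma, c\rangle\circ q_B$ to read off the three components of \eqref{wedge_decomp}. The only difference is that the paper writes out the computation for \eqref{Q_diff} and leaves \eqref{d_gamma} to the reader, whereas you sketch both; your sign bookkeeping for $-(\id_{A^*}\otimes\partial^*)(Q)$ and $-\langle K^*\gamma, b_m\rangle$ agrees with the paper's.
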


\begin{proof}
Fix $a_1,a_2\in\Gamma(A)$ and $c\in\Gamma(C)$. We have
$$
d_D(Q,0)(h(a_1), h(a_2)) = \Lie_{\rho_D(h(a_1))} \,\ell_{Q(a_2)} -
\Lie_{\rho_D(h(a_2))} \,\ell_{Q(a_1)} -\left\langle (Q,0), \,[h(a_1),
  h(a_2)]_D \right\rangle.
$$
As $\rho_D(h(a_i)) \in \mathfrak{X}(B)$ is the linear vector field
corresponding to $\nabla_{a_i}$, we have
$$
 \Lie_{\rho_D(h(a_i))}\, \ell_{Q(a_j)} = \ell_{\nabla^*_{a_i} Q(a_j)}, \,\, \text{for }  1 \leq i\neq j \leq 2. 
$$
Also, by \eqref{pair1} and \eqref{curvature}, we get
$$
\left\langle (Q,0),\, [h(a_1), h(a_2)]_D^{\vphantom{A}}\right\rangle =
\left\langle (Q,0),\, h([a_1, a_2]_A)^{\vphantom{A}} \right\rangle =
\ell_{Q([a_1, a_2]_A)}.
$$
The formula for the component on $\wedge^2 A^*$ now follows by
assembling the terms. Similarly, using that $\langle (Q,0),
\widehat{c} \rangle = \langle (Q,0),\, [h(a_1), \widehat{c}\,]_D
\rangle = 0$, we get
$$
d_D(Q,0)(h(a_1), \widehat{c}) = -
\Lie_{\rho_D(\widehat{c})}\ell_{Q(a_1)} = \langle Q(a_1),
- \partial(c) \rangle \circ q_B.
$$
It is  straightforward to check now that the component in
$\wedge^{2}C^*$ is zero.  The proof of \eqref{d_gamma} is a similar computation
that we leave to the reader.
\end{proof}


\subsection{Morphisms.}

\begin{definition}
  Let $(D \Arrow B; A \Arrow M)$ and $(D' \Arrow B'; A' \Arrow M)$ be
  $\VB$-algebroids. A $\VB$-algebroid morphism from $D$ to $D'$ is a
  double vector bundle morphism $(F; F_{\ver}; F_{\hor})$ from $D$ to $D'$ such that
  $F$ is a Lie algebroid morphism.
\end{definition}

Our aim is to relate $\VB$-algebroid morphisms with morphisms of
representations up to homotopy. Using decompositions, it suffices
to consider morphisms $F$ between trivial double vector bundles $D=A\oplus B\oplus C$
and $D'=A'\oplus B'\oplus C'$. From Example \ref{trivial_dvb}, we know
that a double vector bundle morphism $F: D \Arrow D'$ is determined by vector bundle
morphisms $F_{\ver}: A \Arrow A'$, $F_{\hor}: B \Arrow B'$, $F_{c}: C
\Arrow C'$ and $\Phi \in \Omega^1(A, \Hom(B, C')).$

\begin{theorem}\label{main}
  $F: D \Arrow D'$ is a $\VB$-morphism if and only if $F_{\ver}: A
  \Arrow A'$ is a Lie algebroid morphism and $(F_{c}, F_{\hor}, \Phi)$
  are the components of a morphism $(A, V) \Rightarrow (A', W)$ over
  $F_{\ver}$ between the associated representations up to homotopy
  $V=C_{[0]}\oplus B_{[1]} \in \R \mathrm{ep}^2(A)$ and
  $W=C'_{[0]}\oplus B'_{[1]}\in \R \mathrm{ep}^2(A')$.
\end{theorem}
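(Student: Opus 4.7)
The plan is to use the characterization that a DVB morphism $F\colon D\to D'$ is a Lie algebroid morphism if and only if the induced pullback $F^{*}\colon \Omega^{\bullet}(D')\to \Omega^{\bullet}(D)$ intertwines the Lie algebroid differentials, and to check this identity on a generating set. Since $\Omega^{\bullet}(D')$ is generated as a $C^{\infty}(B')$-algebra by $C^{\infty}(B')$ together with $\Gamma(D'^{*}_{B'})$, and the latter is generated as a $C^{\infty}(B')$-module by the sections identified in the previous subsection, it suffices to verify $F^{*}\circ d_{D'}=d_{D}\circ F^{*}$ on the four families: (i) pullback functions $f\circ q_{B'}$ with $f\in C^{\infty}(M)$; (ii) linear functions $\ell_{\psi'}$ with $\psi'\in \Gamma(B'^{*})$; (iii) core sections $\widehat{\psi'}$ with $\psi'\in \Gamma(A'^{*})$; and (iv) linear sections of types $(Q',0)$ and $(0,\gamma')$ with $Q'\in \Gamma(A'^{*}\otimes B'^{*})$, $\gamma'\in \Gamma(C'^{*})$.

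The first computational step is to unpack $F^{*}$ on each family using the DVB-morphism formula \eqref{trivial_morf}. A direct check gives $F^{*}(f\circ q_{B'})=f\circ q_{B}$, $F^{*}\ell_{\psi'}=\ell_{F_{\hor}^{*}\psi'}$ and $F^{*}\widehat{\psi'}=\widehat{F_{\ver}^{*}\psi'}$. In contrast, the pullback of a linear section of $D'^{*}_{B'}$ of type $(Q',\gamma')$ acquires a correction term governed by $\Phi$: a short computation yields that $F^{*}(Q',\gamma')$ equals the linear section associated to the pair $\bigl((F_{\ver}^{*}\otimes F_{\hor}^{*})Q'+\Phi^{*}\gamma',\,F_{c}^{*}\gamma'\bigr)$, where $(\Phi^{*}\gamma')_{a}(b)=\gamma'(\Phi_{a}(b))$.

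With these formulas in hand, the intertwining identity reduces to comparing two expressions on each generator and matching the resulting tensor components. Cases (i) and (iii), combined with \eqref{d_pull} and \eqref{psi_eq}, yield $\rho_{A'}\circ F_{\ver}=\rho_{A}$ and $d_{A}\circ F_{\ver}^{*}=F_{\ver}^{*}\circ d_{A'}$, which together express exactly the Lie algebroid morphism property of $F_{\ver}$. Case (ii), combined with Lemma \ref{d0_linear}, splits on projecting the resulting linear sections of $D^{*}_{B}$ onto the $C^{*}$- and $(A^{*}\otimes B^{*})$-components into the two relations \eqref{comp1} and the pullback version of \eqref{comp2}. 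Case (iv), combined with \eqref{Q_diff}, \eqref{d_gamma}, and the decomposition \eqref{wedge_decomp}, reproduces \eqref{comp3} in the $\wedge^{2}A^{*}$-component, with $K$ coming from the $K^{*}\gamma$ summand of \eqref{d_gamma} and $d_{\nabla^{\Hom}}\Phi$ arising from differentiating the $\Phi^{*}\gamma'$ correction; the remaining components recover the equations already obtained.

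The main technical obstacle is precisely the $\Phi$-bookkeeping in case (iv): because the pullback of a linear section with nontrivial $C^{*}$-part is not a pure linear section but acquires a core correction proportional to $\Phi$, one must carefully decompose every pulled-back degree-two form into its canonical tensor components before identifying the coefficients with \eqref{comp1}--\eqref{comp3}. Once this decomposition is performed, each of the four cases yields a complete equivalence between the intertwining identity on that generator and the condition listed in the statement, so both implications of the theorem follow simultaneously from this single verification.
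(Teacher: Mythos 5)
Your proposal follows essentially the same route as the paper: the paper also reduces the Lie algebroid morphism condition to the identity $F^*\circ d_{D'}=d_D\circ F^*$ on degree-zero and degree-one generators (pullback and linear functions in Lemma \ref{lem2}; core sections and linear sections of type $(0,\gamma)$ in Lemma \ref{lem3}), uses the same pullback formula $F^*(Q,\gamma)=(F_{\ver}^*\otimes F_{\hor}^*(Q)+\langle\Phi,\gamma\rangle,\,F_c^*\gamma)$ from Lemma \ref{dual_morf}, and matches components via \eqref{d_pull}, \eqref{linear_zero}, \eqref{psi_eq}, \eqref{Q_diff}, \eqref{d_gamma} and the decomposition \eqref{wedge_decomp}. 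The only cosmetic differences are that you also test sections $(Q',0)$ (redundant, since these lie in the $C^\infty(B')$-module generated by core sections) and that the $A^*\otimes C^*$-component of case (iv) actually produces the new relation $\nabla^{\Hom}_a F_c=\Phi_a\circ\partial_V$ rather than repeating an equation from case (ii), but this does not affect the validity of the argument.
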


\begin{remark}
Combining the results in \cite{GrMe10a} with Theorem \ref{main} we conclude that the category of $2$-term representations up to homotopy of a Lie algebroid $A$ is equivalent to the category
of $\VB$-algebroids with side algebroid $A$.
\end{remark}

In the following, let $d_D$ and $d_{D'}$ be the Lie algebroid
differentials of $D \Arrow B$ and $D'\Arrow B'$, respectively. Recall
that $F: D \Arrow D'$ is a Lie algebroid morphism if and only if the
associated exterior algebra morphism, $F^*: \Gamma(\wedge^{\bullet}
{D'}_{B'}^*) \Arrow \Gamma(\wedge^{\bullet} D_B^*)$, intertwines $d_D$
and $d_{D'}$\footnote{Recall that $F^*$ is defined by
$(F^*\omega)(b)(d_1(b),\ldots, d_n(b))=\omega_{F_{\hor}(b)}(F(d_1(b)),
\ldots, F(d_n(b)))$ for $\omega\in \Gamma(\wedge^{n}
{D'}_{B'}^*)$, $b\in B$ and $d_1,\ldots,d_n\in\Gamma(B,D)$.
In particular, we have $F^*(g)= g \circ F_{\hor}$,
for $g\in C^\infty(B)=\Gamma(\wedge^{0}
{D'}_{B'}^*)$.}. Theorem
  \ref{main} will follow from the thorough study of the relation
  $F^*\circ d_{D'} = d_D \circ F^*$, which we carry on in Lemmas
  \ref{lem2} and \ref{lem3} below. First, we shall need a Lemma which
  gives useful formulas for $F^*$ in degree 1, $ F^*: \Gamma({D'}^*_{B'})
  \Arrow \Gamma(D_B^*). $ 

\begin{lemma}\label{dual_morf}
Choose $Q\in \Gamma(B'^*\otimes A'^*)$, $\gamma\in\Gamma(C'^*)$ and
$\psi\in\Gamma(A'^*)$. Then $F^* \widehat{\psi}=
\widehat{F_{\ver}^*\psi}$ and $F^*(Q, \gamma) = (F_{\ver}^*\otimes
F_{\hor}^*(Q) + \langle \Phi, \gamma \rangle, \, F_{c}^*\gamma)$.
\end{lemma}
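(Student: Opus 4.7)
The plan is a direct computation using the explicit description of $F$ from \eqref{trivial_morf}, namely
$$F(a, b, c) = \bigl(F_{\ver}(a),\, F_{\hor}(b),\, F_c(c) + \Phi_a(b)\bigr),$$
together with the identifications of core and linear sections of trivial DVBs provided by Example \ref{trivial_dvb}, applied both to $D_B^*$ and $D'^{\,*}_{B'}$.

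I would first handle the statement about core sections. By definition $\widehat{\psi} = (q^{D'}_{A'})^*\psi$ for $\psi \in \Gamma(A'^*)$, and since $F$ is a DVB morphism covering $F_{\ver}$ along the vertical sides we have $q^{D'}_{A'} \circ F = F_{\ver}\circ q_A^D$. Consequently
$$F^*\widehat{\psi} \;=\; F^*(q^{D'}_{A'})^*\psi \;=\; (q_A^D)^*(F_{\ver}^*\psi) \;=\; \widehat{F_{\ver}^*\psi},$$
settling the first identity.

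For the linear part, I would evaluate $F^*(Q, \gamma)$ on an arbitrary element $(a, b, c) \in D$ over $b \in B$ and read off its components. Example \ref{trivial_dvb} applied to the trivial DVB $D'^{\,*}_{B'}$ (whose side bundles are $B'$ and $C'^{\,*}$ and whose core is $A'^{\,*}$) identifies the linear section $(Q, \gamma)$ with the functional that pairs an element $(a', b', c') \in (q_{B'}^{D'})^{-1}(b'_m)$ to $Q(b', a') + \gamma(c')$. Substituting the expression for $F$ then yields
$$F^*(Q,\gamma)(b)(a, b, c) \;=\; Q\bigl(F_{\hor}(b), F_{\ver}(a)\bigr) + \gamma\bigl(F_c(c)\bigr) + \gamma\bigl(\Phi_a(b)\bigr).$$
The first summand is the pairing of $F_{\ver}^*\otimes F_{\hor}^*(Q) \in \Gamma(B^*\otimes A^*)$ with $(b, a)$; the second is the pairing of $F_c^*\gamma \in \Gamma(C^*)$ with $c$; and the third is $\langle \Phi, \gamma\rangle(a, b)$, where $\langle \Phi, \gamma\rangle \in \Gamma(A^*\otimes B^*)$ is the contraction defined by $\langle \Phi, \gamma\rangle(a, b) = \gamma(\Phi_a(b))$. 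Reading this expression through the identification $\Gamma_\ell(B, D_B^*) \cong \Gamma(B^*\otimes A^*)\oplus \Gamma(C^*)$ yields exactly the claimed formula.

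There is no real obstacle here; the only care required is in setting up correctly the identifications for the dual trivial DVBs (effectively permuting the roles of the bundles in Example \ref{trivial_dvb} to $(C^*, B, A^*)$) and in keeping track of the pairing conventions.
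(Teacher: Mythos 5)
Your computation is correct and is exactly the argument the paper has in mind: its proof of Lemma \ref{dual_morf} simply says the result ``follows directly from Example \ref{trivial_dvb}'' and leaves the details to the reader, and you have supplied precisely those details (the naturality $q_{A'}^{D'}\circ F = F_{\ver}\circ q_A^D$ for the core sections, and the evaluation of $F^*(Q,\gamma)$ on $(a,b,c)$ via \eqref{trivial_morf} for the linear ones). No issues.
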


\begin{proof}
The result follows directly from Example \ref{trivial_dvb}. We leave the
details to the reader.
\end{proof}

Let now $(\partial_W, \nabla^W, K_W)$ and $(\partial_V, \nabla^V,
K_V)$ be the structure operators of the representations up to homotopy
of $A$ on $V=
C_{[0]}\oplus B_{[1]}$ and of $A'$ on $W= C'_{[0]} \oplus B'_{[1]}$,
respectively, and let $\nabla^{\Hom}$ be the $A$-connection on
$\uHom(V, W)$ obtained from $\nabla^V$ and
$(\nabla^W)^{\scriptscriptstyle F_{\ver}}$ (see \eqref{pull_con}).

\begin{lemma}\label{lem2}
$F^* \circ d_{D'} = d_{D} \circ F^*$ 
holds on $\Gamma(\wedge^0 {D'}^*_{B'})=C^\infty(B')$ if and only if
\begin{equation}\label{eq1_lem2}
\rho_{A'} \circ F_{\ver} = \rho_A,
\end{equation}
\begin{equation}\label{eq2_lem2}
F_{\hor} \circ \partial_V = \partial_W \circ F_{c}
\end{equation}
and
\begin{equation}\label{eq3_lem2}
\nabla^{\Hom}_a F_{\hor}  = \partial_W \circ \Phi_a, \, \forall\, a \in \Gamma(A).
\end{equation}
\end{lemma}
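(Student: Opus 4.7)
The plan is to verify the equality $F^*\circ d_{D'} = d_D\circ F^*$ on a generating subset of $C^\infty(B')$. Since both $d_D$ and $d_{D'}$ satisfy the Leibniz rule and $F^*$ is an algebra morphism on functions and satisfies $F^*(g\omega) = F^*(g)F^*(\omega)$ for $g\in C^\infty(B')$ and $\omega\in\Gamma({D'}^*_{B'})$, an equality on two functions extends automatically to their product. Locally, $C^\infty(B')$ is generated as a $C^\infty(M)$-algebra by the pullback functions $q_{B'}^*f$, for $f\in C^\infty(M)$, and the linear functions $\ell_\psi$, for $\psi\in\Gamma(B'^*)$, so I would check the commutation on these two classes.

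For pullback functions I would use that $F_{\hor}$ covers $\mathrm{id}_M$, so $F^*(q_{B'}^*f) = q_B^*f$, and then apply \eqref{d_pull} to both $D$ and $D'$. Together with the identity $q^{D'}_{A'}\circ F = F_{\ver}\circ q^D_A$ coming from the DVB morphism structure, the equation reduces to $q_A^{D*}(d_A f) = q_A^{D*}(F_{\ver}^*d_{A'}f)$ for every $f\in C^\infty(M)$; this is equivalent to $d_A = F_{\ver}^*\circ d_{A'}$ on functions, which is precisely \eqref{eq1_lem2}.

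For linear functions, first note that $F^*(\ell_\psi) = \ell_{F_{\hor}^*\psi}$. Applying Lemma \ref{d0_linear} to both $D$ and $D'$, together with Lemma \ref{dual_morf} to compute $F^*(d_{D'}\ell_\psi)$, the desired equality in $\Gamma_{\ell}(B,D^*_B)\cong\Gamma(A^*\otimes B^*)\oplus\Gamma(C^*)$ splits into two independent identities. The $\Gamma(C^*)$-component reads $\partial_V^*\circ F_{\hor}^* = F_c^*\circ\partial_W^*$, which is the dual of \eqref{eq2_lem2}. The $\Gamma(A^*\otimes B^*)$-component reads
\[
d_{(\nabla^V)^*}(F_{\hor}^*\psi) \;=\; F_{\ver}^*\otimes F_{\hor}^*\bigl(d_{(\nabla^W)^*}\psi\bigr) + \langle \Phi,\partial_W^*\psi\rangle;
\]
evaluating at $a\in\Gamma(A)$ and $b\in\Gamma(B)$, and unpacking the definitions of the pullback connection $(\nabla^W)^{F_{\ver}}$ \eqref{pull_con} and of the Hom-connection $\nabla^{\Hom}$ on $\uHom(V,W)$, one recognizes the left-hand side minus the first term on the right as $\langle\psi,\nabla^{\Hom}_a F_{\hor}(b)\rangle$ and the remaining term as $\langle\psi,\partial_W\Phi_a(b)\rangle$, yielding \eqref{eq3_lem2}. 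Since $\nabla^{\Hom}$ requires the pullback connection to be well-defined, in the forward direction one establishes \eqref{eq1_lem2} first from the pullback-function computation, which makes sense of \eqref{eq3_lem2}; the converse direction is read off from the same formulas.

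The main obstacle is the careful dualization in the $\Gamma(A^*\otimes B^*)$-component: one must chase the definitions of $(\nabla^V)^*$, of the pullback connection along $F_{\ver}$, and of $\nabla^{\Hom}$, and identify the resulting expression as $\partial_W\circ\Phi_a - \nabla^{\Hom}_a F_{\hor}$. The underlying calculation is routine but requires some bookkeeping.
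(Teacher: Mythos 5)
Your proposal is correct and follows essentially the same route as the paper: reduce to pullback functions $f\circ q_{B'}$ and linear functions $\ell_\psi$, obtain \eqref{eq1_lem2} from \eqref{d_pull}, and for linear functions combine Lemma \ref{d0_linear} with Lemma \ref{dual_morf} and compare the $\Gamma(C^*)$ and $\Gamma(A^*\otimes B^*)$ components to recover the duals of \eqref{eq2_lem2} and \eqref{eq3_lem2}. The only additions are the explicit justification of why checking on these generators suffices and the remark on the order of deductions, both of which the paper leaves implicit.
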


\begin{proof}
  It suffices to consider $f \circ q_{B'}$, for $f
  \in C^{\infty}(M)$ and linear functions $\ell_{\beta}$, for $\beta
  \in \Gamma({B'}^*)$. Now, \eqref{eq1_lem2} follows directly from
  \eqref{d_pull}. For the other two equations, first observe that
  $F^*(\ell_{\beta}) = \ell_{F_{\hor}^*\beta}$. 
The identity $$
d_{D}(\ell_{F_{\hor}^*\beta})= (d_{{\nabla^V}^*} F_{\hor}^*\,\beta , (F_{\hor}
\circ \partial_V)^*\beta) \in \Gamma(A^*\otimes B^*)\oplus \Gamma(C^*)
$$
follows from
  \eqref{linear_zero}.
On the other hand, due to \eqref{linear_zero} and Lemma
\ref{dual_morf}, we have $F^*(d_{D}(\ell_{\beta})) = (Q, \gamma)$,
where
$$
Q = F^*_{\ver} \otimes F^*_{\hor} \,(d_{{\nabla^W}^*}\beta) + \langle \Phi, \partial_W^* \beta \rangle
$$
$$
\gamma= (\partial_W\circ F_{c})^*\beta.
$$
By comparing the components in $\Gamma(C^*)$ and $\Gamma(A^*\otimes
B^*)$, one finds equations which are dual to
\eqref{eq2_lem2} and \eqref{eq3_lem2}, respectively. This proves the
lemma.
\end{proof}

\begin{lemma}\label{lem3}
$F^* \circ d_{D'} = d_{D} \circ F^*$ holds on $\Gamma(\wedge^1 D'^*_{B'})$ if and only if
\begin{equation}\label{eq1_lem3}
d_A \circ F^*_{\ver} = F^*_{\ver} \circ d_{A'} \text{ in } \Gamma(A'^*),
\end{equation}
\begin{equation}\label{eq2_lem3}
\nabla^{\Hom}_a F_{c} = \Phi_a\circ \partial_V, \, \forall \,a \in \Gamma(A)
\end{equation}
and
\begin{equation}\label{eq3_lem3}
d_{\nabla^{\Hom}}\Phi= F_{c} \circ K_V - (F_{\ver}^*K_W) \circ F_{\hor}.
\end{equation}
\end{lemma}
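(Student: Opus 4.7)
The plan is to test the identity $F^*\circ d_{D'} = d_D\circ F^*$ on a set of generators of $\Gamma(D'^*_{B'})$, then read off the three stated equations by projecting onto the three summands of the decomposition \eqref{wedge_decomp}, $\wedge^2 D^*_B = q_B^!(\wedge^2 A^* \oplus (A^*\otimes C^*) \oplus \wedge^2 C^*)$. Since both sides of the identity are $F_{\hor}^*$-semi-linear derivations, by Example \ref{trivial_dvb} it suffices to check it on core sections $\widehat{\psi}$ for $\psi\in\Gamma(A'^*)$, and on linear sections of types $(Q,0)$ with $Q\in\Gamma(A'^*\otimes B'^*)$ and $(0,\gamma)$ with $\gamma\in\Gamma(C'^*)$.

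First, I would test the identity on $\widehat{\psi}$. By \eqref{psi_eq}, $d_{D'}\widehat{\psi}$ sits entirely in the $q_{B'}^!(\wedge^2 A'^*)$ summand. Multiplicativity of the exterior-algebra morphism $F^*$ combined with $F^*\widehat{\psi'} = \widehat{F_{\ver}^*\psi'}$ (Lemma \ref{dual_morf}) shows that on this summand $F^*$ acts as $q_B^*\circ F_{\ver}^{(2)*}$. Both $F^*(d_{D'}\widehat{\psi})$ and $d_D(F^*\widehat{\psi}) = d_D\widehat{F_{\ver}^*\psi}$ therefore reduce to the $\wedge^2 A^*$ component, yielding directly \eqref{eq1_lem3}.

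Next, I would test on $(0,\gamma)$, which is the case where all of $\Phi$, $K_V$, $K_W$ and $\nabla^{\Hom}$ come into play. By Lemma \ref{dual_morf}, $F^*(0,\gamma) = (\langle\Phi,\gamma\rangle,\, F_c^*\gamma)$, so $d_D(F^*(0,\gamma))$ is computed by applying \eqref{Q_diff} to $(\langle\Phi,\gamma\rangle,0)$ and \eqref{d_gamma} to $(0,F_c^*\gamma)$; on the other side, $F^*(d_{D'}(0,\gamma))$ is computed from \eqref{d_gamma} followed by the action of $F^*$ in degree 2, which mixes the summands through $\Phi$ precisely because $F^*(0,\gamma)$ is not supported purely in one factor. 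Comparing $A^*\otimes C^*$ components produces the dual of \eqref{eq2_lem3}, and comparing $\wedge^2 A^*$ components, after substituting \eqref{eq1_lem3} and using \eqref{eq3_lem2} from Lemma \ref{lem2} to rewrite $\partial_W\circ\Phi_a$ as $\nabla^{\Hom}_a F_{\hor}$, recovers exactly the dual of \eqref{eq3_lem3}. The $\wedge^2 C^*$ components vanish on both sides.

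Finally, I would check that testing on $(Q,0)$ produces no new condition: by Lemma \ref{dual_morf}, $F^*(Q,0) = (F_{\ver}^*\otimes F_{\hor}^*Q,\,0)$, and a direct expansion of both sides via \eqref{Q_diff} together with \eqref{eq1_lem3}, \eqref{eq3_lem2} and \eqref{eq2_lem3} gives an identity. The main obstacle is the bookkeeping: because $F^*(0,\gamma)$ has nontrivial projections on both $\Gamma(A^*\otimes B^*)$ and $\Gamma(C^*)$, passing to degree~2 the Hom-connection term $d_{\nabla^{\Hom}}\Phi$ and the curvature terms $K_V$, $K_W$ spread across all three summands of $\wedge^2 D^*_B$, and one has to carefully collect the $\Phi$-dependent corrections in each component; the simplifications that assemble them into the single relation \eqref{eq3_lem3} are precisely the defining identity for $d_{\nabla^{\Hom}}$ on $\Omega^1(A,\uHom(V,W))$.
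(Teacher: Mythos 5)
Your proposal is correct and follows essentially the same route as the paper: evaluate $F^*\circ d_{D'}-d_D\circ F^*$ on the generators of $\Gamma(D'^*_{B'})$, use Lemma \ref{dual_morf} together with \eqref{psi_eq}, \eqref{Q_diff} and \eqref{d_gamma}, and read off \eqref{eq1_lem3}--\eqref{eq3_lem3} from the three components of the decomposition \eqref{wedge_decomp}. The only difference is that the paper skips your third step, since sections of type $(Q,0)$ are $C^{\infty}(B')$-combinations of core sections $\widehat{\psi}$ and thus carry no new information once the degree-zero case (Lemma \ref{lem2}) and the core-section case are settled.
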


\begin{proof}
  It suffices to consider core sections $\widehat{\psi}$ and linear
  sections of the type $(0, \gamma)$, where $\gamma \in \Gamma(C'^*)$
  and $\psi \in \Gamma(A'^*)$. Equation \eqref{eq1_lem3} is equivalent
  to $F^* \circ d_{D'}(\widehat{\psi}) = d_D \circ
  F^*(\widehat{\psi})$. Now, according to the decomposition
  \eqref{wedge_decomp}, we find $F^* \circ d_{D'}(0, \gamma) =
  (\Lambda_1, \Lambda_2, \Lambda_3)$, where $\Lambda_3$ is the zero
  section of $\wedge^2 q_{B}^!C^*$,
$$
\Lambda_1(b_m) = -\langle \,(F_{\ver}^*K_W)^*\gamma, F_{\hor}(b_m)
\,\rangle + (F_{\ver}^* \wedge \Phi^*(b_m))(d_{{\nabla^W}^*} \gamma(m)),
$$
and 
$$
\Lambda_2(b_m)= F_{\ver}^* \otimes F_{c}^*\,(d_{{\nabla^W}^*} \gamma(m)),
$$
where $m \in M$, $b_m \in B_m$ and $\Phi(b_m)$ is seen as a map from
$A$ to $C'$ with dual $\Phi^*(b_m): C'^* \Arrow A^*$.
Similarly, by Lemma \ref{dual_morf} and formulas \eqref{Q_diff},
\eqref{d_gamma}, it follows that $d_D(F^*(0, \gamma)) = d_D(\langle
\Phi, \gamma\rangle, \,F_{c}^*\gamma) = (\Theta_1, \Theta_2,
\Theta_3)$, where $\Theta_3$ is again the zero section of $\wedge^2
q_{B}^!C^*$,
$$
\Theta_1(b_m) = \langle d_{{\nabla^V}^*} \langle \Phi, \gamma \rangle, \, b_m
\rangle - \langle K_V^*(F_{c}^*\gamma), b_m \rangle.
$$
and
$$
\Theta_2(b_m) = - (\id_{A}^* \otimes \partial_V^*)\langle \Phi,
\gamma(m) \rangle + d_{{\nabla^V}^*}  F_{c}^*\gamma(m).
$$
The equalities $\Lambda_2 = \Theta_2$ and $\Lambda_1= \Theta_1$ are
equivalent to the equations dual to \eqref{eq2_lem3} an \eqref{eq3_lem3},
respectively.
\end{proof}

\begin{proof}[Proof of Theorem \ref{main}]
  Equations \eqref{eq1_lem2} and \eqref{eq1_lem3} are equivalent to
  $F_{\ver}$ being a Lie algebroid morphism. Similarly, equations
  \eqref{eq2_lem2}, \eqref{eq3_lem2}, \eqref{eq2_lem3} and
  \eqref{eq3_lem3} are equivalent to $(F_{c}, F_{\hor}, \Phi)$ being
  the components of a morphism $(A, V) \Rightarrow (A', W)$. This
  proves the Theorem.
\end{proof}


\begin{example}
  Let $(A, [\cdot, \cdot]_A, \rho_A)$ be a Lie algebroid over $M$. An
  IM-2-form \cite{BuCrWeZh04} on $A$ is a pair $(\mu, \nu)$ where $\mu: A \Arrow T^*M$
  and $\nu: A \Arrow \wedge^2 T^*M$ such that
\begin{enumerate}
\item$\langle \mu(a), \rho_A(b) \rangle = - \langle \mu(b), \rho_A(a) \rangle$;
\item$\mu([a,b]) = \Lie_{\rho_A(a)} \mu(b) - i_{\rho_A(b)}(d\mu(a) + \nu(a))$;
\item$\nu([a,b]) = \Lie_{\rho_A(a)}\nu(b) - i_{\rho_A(b)}d\nu(a)$,
\end{enumerate}
for $a, b \in \Gamma(A)$. In \cite{BuCa12} (see also \cite{BuCaOr09} for the
case where $\nu=0$), it is shown that every IM-2-form is associated to
a 2-form $\Lambda \in \Omega^2(A)$ whose associated map
$\Lambda_{\sharp}: TA \Arrow T^*A$ is a $\VB$-algebroid morphism from
$(TA \Arrow TM; A \Arrow M)$ to $(T^*A \Arrow A^*; A \Arrow M)$
inducing $\mu: A \Arrow T^*M$ on the core bundles and $-\mu^*:TM
\Arrow A^*$ on the side bundles.
 Let $\sigma \in \dec{TA}$ and
$\sigma_A^*$ be its dual over the side bundle $A$. From \cite{BuCa12}
(see Lemma 3.6 there), it follows that $F= \sigma_A^* \circ \Lambda_\sharp
\circ \sigma: A\oplus TM \oplus A \Arrow A\oplus A^*\oplus T^*M$ has
components given by $F_{\ver} = \mathrm{id}_A$, $F_{\hor}=
-\mu^*$, $F_c = \mu$ and
$$
\Phi = \nu + d_{\nabla^*}\mu^* \in \Omega^1(A; \Hom(TM, T^*M)).
$$
where $\nabla$ is the connection on $A$ associated to $\sigma$ and
$d_{\nabla^*}: \Omega(TM; A^*) \Arrow \Omega(TM; A^*)$ the Koszul
differential associated to the dual connection. Note that we are
identifying $\Omega^2(TM, A^*)$ with $\Omega^1(A; \wedge^2 T^*M)$ and
seeing $\wedge^2 T^*M$ as a subset of $\Hom(TM, T^*M)$. So, as a result of
Theorem \ref{main}, one has that $(\mu, \nu)$ is an IM-2-form if and
only if $(\mu, -\mu^*, \nu + d_{\nabla^*}\mu^*)$ are the components of a
morphism from the adjoint representation $ad_{\nabla}(A)$ to the
coadjoint representation $ad^{\top}_{\nabla}(A)$.
\end{example}

\begin{example}
  Let $(A, [\cdot, \cdot], \rho_A)$ be a Lie algebroid such that its
  dual $A^*$ has also a Lie algebroid structure $(A^*, [\cdot,
  \cdot]_{A^*}, \rho_{A^*})$. It is shown in \cite{MaXu94} that $(A,
  A^*)$ is a Lie bialgebroid if and only if $\pi_A^{\sharp}: T^*A
  \Arrow TA$ is a $\VB$-algebroid morphism from the cotangent Lie
  algebroid $(T^*A \Arrow A^*, A \Arrow M)$ to the tangent
  prolongation $(TA \Arrow TM; A \Arrow M)$, where $\pi_A\in
  \Gamma(\wedge^2 TA)$ is the linear Poisson bivector corresponding to
  the Lie algebroid $A^*$. For any decomposition $\sigma \in
  \dec{TA}$, it follows from \cite{MaXu94} (see Corollary 6.5 there)
  that $ \sigma^{-1} \circ \pi_A^{\sharp} \circ (\sigma_A^*)^{-1}:
  A\oplus A^*\oplus T^*M \Arrow A\oplus TM \oplus A $ has components
  given by $F_{\ver} = \mathrm{id}_A$, $F_{\hor} = \rho_{A^*}$, $F_{c}
  = - \rho_{A^*}^*$ and $\Phi \in \Omega^1(A; \Hom(A^*, A))$ defined
  by
$$
\langle \Phi_{a}(\alpha), \beta \rangle = -d_{A^*} a (\alpha, \beta) +
\langle \beta, \nabla_{\rho_{A^*}(\alpha)} \,a \rangle - \langle
\alpha, \nabla_{\rho_{A^*}(\beta)} \, a \rangle , \,\, (\alpha, \beta)
\in A^*\times_M A^*,
$$
where $d_{A^*}: \Gamma(\wedge A) \Arrow \Gamma(\wedge A)$ is the Lie
algebroid differential of $A^*$, $\nabla: \Gamma(TM) \times\Gamma(A)
\Arrow \Gamma(A)$ is the connection corresponding to $\sigma$ and
$\sigma_A^*$ is the dual of $\sigma$ over $A$. Note that
$$
\langle \Phi_{a}(\alpha), \beta) \rangle = \langle a, T^{\rm bas} (\alpha, \beta) \rangle,
$$
where $T^{\rm bas}$ is the torsion of the basic connection 
$$
\begin{array}{ccl}
\Gamma(A^*)\times \Gamma(A^*) & \lrrow & \Gamma(A^*)\\
    (\alpha, \beta) &  \longmapsto & [\alpha, \beta]_{A^*} + \nabla^*_{\rho_{A^*}(\beta)} \,\alpha.
\end{array}
$$
As a result of Theorem \ref{main}, we have that $(A, A^*)$ is a Lie
bialgebroid if and only if $(-\rho^*_{A^*}, \rho_{A^*}, T^{\rm bas})$
are the components of a morphism from the coadjoint representation
$\mathrm{ad}_{\nabla}^{\top}$ to the adjoint representation
$\mathrm{ad}_{\nabla}$.

\end{example}


\section{Distributions and foliations.}


Let $q_B: B \Arrow M$ be a vector bundle. A \textit{linear
  distribution} on $B$ is a subbundle $\Delta \subset TB$ such that
\begin{equation}\label{dist}
\begin{CD}
\Delta @>>>   B\\
 @V Tq_B VV   @VVq_BV\\
\Delta_M @>>> M
\end{CD}
\end{equation}
is a double vector bundle. It is called a \textit{linear foliation} if
$\Delta$ is integrable (or, equivalently, $\Delta \Arrow B$ is a Lie
subalgebroid of $TB \Arrow B$). Linear distributions and foliations
are particular examples of double vector subbundles and
$\VB$-subalgebroids, respectively. In this section we develop the
general theory of these objects. Our goal is to identify
invariants of distributions and foliations on Lie algebroids.

\subsection{Double vector subbundles and adapted decompositions.}

\begin{definition}
  Let $(D'; A', B'; M)$ be a double vector bundle. We say that $(D; A, B; M)$ is a
  \textit{double vector subbundle} of $D'$ if
\begin{enumerate}
\item $(D; A; B; M)$ is a double vector bundle;
\item $D \subset D'$; $A \subset A'$ and $B \subset B'$ are subbundles;
\item the inclusion $i: D \hookrightarrow D'$ is a morphism of double vector bundles
  inducing the inclusions $i_A: A\hookrightarrow A'$ and $i_B:
  B\hookrightarrow B'$ on the side bundles.
\end{enumerate}
\end{definition}

Note that the core $C'$ of $D'$ is a subbundle of $C$ and the map $i:
D \hookrightarrow D'$ induces the inclusion $i_C: C' \hrrow C$ on the
core bundles.

\begin{example}\label{model_sub}
Let $D'=A'\oplus B'\oplus C'$ be the trivial double vector bundle with core $C'$. Given
vector subbundles $A\subset A'$, $B\subset B'$ and $C\subset C'$, the
trivial double vector bundle $D=A\oplus B\oplus C$
with core $C$ is canonically a double vector subbundle of $D'$.
\end{example}

The inclusion of trivial double vector bundles of Example \ref{model_sub} should be
seen as a model for general double vector subbundles. Let us be more
precise.

\begin{definition}
  Let $(D;A,B;M)$ be a double vector sub-bundle of $(D'; A', B';
  M)$. We say that a decomposition $\sigma': A'\oplus B'\oplus C'
  \Arrow D'$ is \textit{adapted to} $D$ if $\sigma'(A\oplus B\oplus
  C)= D$. In this case, the induced decomposition
  $\sigma:=\sigma'|_{A\oplus B\oplus C}$ of $D$ makes the diagram
  below commutative
\begin{equation}\label{adapted_diagram}
  \xy
  (0,0)*++^{A\oplus B \oplus C}="sub"; (0,10)*+^{A'\oplus B'\oplus C'}="can"; (30,10)*+^{D'}="D'"; (30,0)*++^{D}="D";
  {\ar@{^{(}->}"sub"; "can"}; 
  {\ar@{->}_{\hspace{15pt}\sigma}"sub"; "D"};
  {\ar@{->}^{\hspace{15pt}\sigma'}"can"; "D'"};
  {\ar@{^{(}->}_{\,\,i}"D"; "D'"};
\endxy
\end{equation}
where the left vertical arrow is the canonical inclusion of Example \ref{model_sub}
\end{definition}

A horizontal lift $h: A' \Arrow \widehat{A'}$ is adapted to $D$ if its
corresponding decomposition $\sigma_h$ \eqref{decomp} is adapted to
$D$. Equivalently, $h$ is adapted to $D$ if and only if, for $a \in
\Gamma(A)$, the linear section $h(a): B' \Arrow D'$ satisfies $h(a)(B)
\subset D$. In this case, $h|_{A}$ is a horizontal lift for $D$.

\begin{example}
  A connection $\nabla: \Gamma(TM) \times \Gamma(B) \Arrow \Gamma(B)$
  is adapted to a linear distribution $\Delta$ if for every $x \in
  \Gamma(TM)$ the linear vector field $X_{\nabla_x}: B \Arrow TB$
  corresponding to the derivation $\nabla_x: \Gamma(B) \Arrow
  \Gamma(B)$ is a section of the distribution $\Delta$.
\end{example}

Recall that $\dec{D'}$, the space of decompositions of $D'$, is affine
modelled over $\Gamma(A'^*\otimes B'^*\otimes C')$. Define
\begin{equation}\label{sub_affine}
  \Gamma_{A, B, C}=\{\Phi \in \Gamma(A'^*\otimes B'^*\otimes C')\,\, | \,\, \Phi_a(B) \subset C, \, \forall \, a \in A\}.
\end{equation}

\begin{proposition}\label{adapted_prop}
  Let $(D'; A', B'; M)$ be a double vector bundle and $A,B$ and $C$ vector subbundles 
of $A',B'$ and $C'$, respectively. There is a one-to-one correspondence
$$
\vspace{2pt}
\left\{
\begin{array}{c}
\text{Double vector subbundles} \, (D; A, B; M) \text{ of } D' \\
\text{having } \, $C$ \text{ as core bundle.} 
\end{array}
\right\} \,\stackrel{1-1}\longleftrightarrow \, \frac{\dec{D'}}{\Gamma_{A,B,C}}.
\vspace{2pt}
$$
More precisely, for a double vector subbundle $(D;A,B;M)$, the
set of decompositions adapted to $D$ is an orbit for $\Gamma_{A, B,
  C}$. Reciprocally, given a decomposition $\sigma' \in \dec{D'}$, the
double vector subbundle $D=\sigma'(A\oplus B\oplus C)$ only depends on
the $\Gamma_{A, B, C}$-orbit of $\sigma'$ and any decomposition in
this orbit is adapted to $D$.
\end{proposition}

\begin{proof}
  For a double vector subbundle $(D; A, B; M)$, we shall first prove
  that decompositions adapted to $D$ always exist and then that they
  form an orbit for $\Gamma_{A, B, C}$. Begin with two arbitrary
  decompositions $\sigma: A\oplus B\oplus C \Arrow D$ and $\sigma':
  A'\oplus B'\oplus C' \Arrow D'$ and consider $\sigma'^{-1} \circ
  i\circ \sigma: A \oplus B \oplus C \Arrow A'\oplus B'\oplus C'$. It
  is a morphism between trivial double vector bundles inducing the inclusions on $A$,
  $B$ and $C$. Hence, there exists $\Phi \in \Gamma(A^*\otimes
  B^*\otimes C')$ such that
$$
\sigma'^{-1}\circ i \circ \sigma (a, b, c) = (a, b, c + \Phi_a(b)).
$$
For any $\Phi' \in \Gamma(A'^*\otimes B'^*\otimes C')$ extending
$\Phi$ (i.e.~$\Phi'_a(b) = \Phi_a(b)$, $\forall\, a \in A, b \in B$),
the decomposition $\Phi'\cdot\sigma'$ is adapted to $D$. Now, if $\sigma_1,
\sigma_2$ are arbitrary decompositions, there exists an unique $\Phi
\in \Gamma(A'^*\otimes B'^* \otimes C')$ such that $\sigma_1 = \Phi
\cdot \sigma_2$. It is straightforward to check that they lie in the
same orbit if and only if $I_{\Phi}= \sigma_2^{-1}\circ \sigma_1$ (see
\eqref{I_nu}) preserves $A\oplus B\oplus C$. This implies that the
decompositions adapted to $D$ is an $\Gamma_{A, B, C}$-orbit and that
the map
$$
\frac{\dec{D'}}{\Gamma_{A,B,C}} \ni [\sigma '] \mapsto \sigma '(A\oplus B\oplus C)
$$
is well-defined.
\end{proof}

Now we will use Proposition \ref{adapted_prop} to classify linear
distributions. Let $(\Delta; \Delta_M, B;M)$ be a linear distribution
with core $C \subset B$ and consider the quotient map $\pi: B \Arrow
B/C$. In the following, we identify $\dec{TB}$ with the space of
connections $\nabla: \Gamma(B) \Arrow \Gamma(T^*M\otimes B)$.

\begin{lemma}\label{1_1:lemma}
The map
$$
\begin{array}{rcl}
\displaystyle \frac{\dec{TB}}{\Gamma_{\Delta_M, B, C}} & \lrrow & \left\{\bbd: \Gamma(B) \Arrow \Gamma(\Delta_M^*\otimes (B/C))\,\, 
\left|
\begin{array}{l}
 \bbd_x (fb) = f\bbd_x(b) + (\Lie_xf) \pi(b), \vspace{3pt}\\
 \forall \, f \in C^{\infty}(M), \, b \in \Gamma(B), \, x \in \Gamma(\Delta_M)
\end{array}
\right.
\right\}
\\
& &\\
 \left[\nabla\right] & \longmapsto & (r\otimes\pi)\circ \nabla
\end{array}
$$
is a bijection, where $r: T^*M \Arrow \Delta_M^*$ is the restriction map.
\end{lemma}

\begin{proof}
  The fact that the map is well-defined follows directly from the
  definition of the affine action \eqref{affine_str} on the space of
  connections. Let us now prove that given $\bbd: \Gamma(B) \Arrow
  \Gamma(\Delta_M^* \otimes (B/C))$ satisfying the Leibniz equation
\begin{equation}\label{Leibniz}
\bbd_x (fb) = f\bbd_x(b) + (\Lie_xf) \pi(b),
\end{equation}
there exists a connection $\nabla \in \dec{TB}$ such that
$\bbd=(r\otimes \pi) \circ \nabla$.  For this, let $s: B/C \Arrow B$
be a linear section for the quotient projection $\pi: B \Arrow B/C$
and identify $B$ with $(B/C)\oplus C$ using $s$. Also, choose a
connection $\widetilde{\nabla}: \Gamma(TM) \times \Gamma(B) \Arrow
\Gamma(B)$ which preserves both $B$ and $B/C$.

First, note that formula \eqref{Leibniz} implies that,
for $x \in \Gamma(\Delta_M)$, $\bbd_x: \Gamma(B) \Arrow \Gamma(B/C)$
is actually linear when restricted to $\Gamma(C)$. Second, note that \eqref{Leibniz} implies that the map 
$$
\begin{array}{rccl}
\bbd_x \circ s: & \Gamma(B/C) & \lrrow &  \Gamma(B/C)\\
  & \gamma & \longmapsto & \bbd_x(s(\gamma))\\
\end{array}
$$
is a derivation on $B/C$ having $x \in \Gamma(\Delta_M)$ as symbol. So, define $\widetilde{\Phi}
\in \Gamma(T^*M \otimes B^* \otimes B)$
$$
\widetilde{\Phi}_x(\gamma) = 
\left\{
\begin{array}{ll}
   \left\{
   \begin{array}{ll}
    s \circ \bbd_x(\gamma), & \text{if} \,\, \gamma \in C;\\
    (\bbd_x \circ s -\widetilde{\nabla}_x)(\gamma), & \text{if} \,\, \gamma \in B/C;
\end{array}
\right. & \text{ if } x \in \Gamma(\Delta_M); \vspace{5pt}\\
0,& \text{otherwise.}
\end{array}
\right.
$$
It is now straightforward to check that the connection $\nabla:
\Gamma(B) \Arrow  \Gamma(T^*M\otimes B)$ given by $\nabla =
\widetilde{\nabla} + \widetilde{\Phi}$ satisfies $\bbd= (r\otimes \pi)
\circ \nabla$.
\end{proof}

For a linear distribution $(\Delta; \Delta_M, B; M)$ with core $C$,
one can canonically construct a map $\bbd^{\Delta}: \Gamma(B) \Arrow
\Gamma(\Delta_M^* \otimes (B/C))$ satisfying \eqref{Leibniz} as
follows:
\begin{equation}\label{mu_delta}
\bbd^{\Delta}_x(b) = \pi\circ L_{X}(b), \,\, x \in \Gamma(\Delta_M), \, b \in \Gamma(B),
\end{equation}
where $X: B \Arrow \Delta$ is any linear section covering $x$,
$L_{X}: \Gamma(B) \Arrow \Gamma(B)$ is the derivation defined by
\begin{equation}\label{derivation}
L_{X}(b)^{\uparrow} = [X, b^{\uparrow}], \text{ for } b \in \Gamma(B).
\end{equation}
It is straightforward to check that $\pi \circ L_{X}$ depends only
on $x$ and not on the particular choice of $X: B \Arrow
\Delta$.

\begin{theorem}\label{spencer_thm}
  A connection $\nabla: \Gamma(B) \Arrow \Gamma(T^*M \otimes B)$ is
  adapted to $\Delta$ if and only if
\begin{equation}\label{adapted_cond}
(r\otimes \pi) \circ \nabla = \bbd^{\Delta}.
\end{equation}
In particular, the map $\Delta \mapsto \bbd^{\Delta}$ establishes a one-to-one correspondence
between linear distributions $(\Delta; \Delta_M, B; M)$ with core $C$
and $\R$-linear maps $\bbd: \Gamma(B) \Arrow \Gamma(\Delta_M^* \otimes
(B/C))$ satisfying the Leibniz equation \eqref{Leibniz}.
\end{theorem}

\begin{proof}
If   $\nabla$ is adapted to $\Delta$, then the linear vector field
$X_{\nabla_x}: B \Arrow TB$ corresponding to the derivation $\nabla_x: \Gamma(B)
 \Arrow \Gamma(B)$ is a linear section of $\Delta$. Hence, it follows 
from the definition \eqref{mu_delta} that
$$
\bbd^{\Delta}_x = \pi\circ L_{X_{\nabla_x}} = \pi \circ \nabla_x,
$$
for $x \in \Gamma(\Delta_M)$. On the other hand, if
\eqref{adapted_cond} holds, then, for every $x \in \Gamma(\Delta_M)$,
there exists a linear section $X: B \Arrow \Delta$ covering $x$
such that $\delta:=\nabla_x - L_{X} \in \Hom(B, C)$, where
$L_{X}$ is the derivation defined by \eqref{derivation}. In terms
of sections,
$$
X_{\nabla_x} = X + \delta^{\uparrow}.
$$
As $C$ is the core bundle of $\Delta$, one gets that $X_{\nabla_x}$
is a section of $\Delta$ and, therefore, $\nabla$ is adapted to
$\Delta$. The last statement follows from Lemma \eqref{1_1:lemma}.
\end{proof}

\begin{remark}\label{lin_mult}\em
  If one considers the Lie groupoid $B \rightrightarrows M$, where the
  source and the target are the projection $q_B:B \Arrow M$ and the
  multiplication is addition on the fibers, then a linear distribution
  is just a multiplicative distribution in the sense of
  \cite{CrSaSt12, JoOr13}. In this situation, the one-to-one
  correspondence above was also obtained in \cite{CrSaSt12} for
  $\Delta_M = TM$. The map $\bbd^{\Delta}$ is called by the authors the
  \textit{Spencer operator relative to $\pi$}.
\end{remark}


\subsection{Infinitesimal ideal systems, distributions on Lie algebroid and subrepresentations.}

Let us recall the definition of an infinitesimal ideal system
\cite{Hawkins08, JoOr13} on a Lie algebroid $(A, [\cdot, \cdot]_A,
\rho_A)$.

\begin{definition}
  An infinitesimal ideal system on $A$ is a triple $(\Delta_M, C,
  \widetilde{\nabla})$, where $C \subset A$ is a subalgebroid,
  $\Delta_M \subset TM$ is a integrable distribution and
  $\widetilde{\nabla}: \Gamma(\Delta_M) \times \Gamma(A/C) \Arrow
  \Gamma(A/C)$ is a flat connection satisfying the following
  properties:
\begin{enumerate}
\item if $\pi(a)$ is parallel, then $[a, \Gamma(C)]_A \subset \Gamma(C)$.
\item if $\pi(a), \pi(b)$ are parallel, then $\pi([a,b]_A)$ is parallel;
\item if $\pi(a)$ is parallel, then $[\rho(a), \Gamma(\Delta_M)]
  \subset \Gamma(\Delta_M)$;
\end{enumerate} 
where $a, b \in \Gamma(A)$ and $\pi: A \Arrow A/C$ is the quotient map.
\end{definition}

Given an infinitesimal ideal system $(\Delta_M, C,
\widetilde{\nabla})$ on $A$, it follows from Theorem \ref{spencer_thm}
that there exists an associated linear distribution $(\Delta; \Delta_M,
A; M)$ on $A$ having core $C$ corresponding to an operator $\bbd:
\Gamma(A) \Arrow \Gamma(\Delta^*_M \otimes (A/C))$ defined as zero on
$C$ and equal to $\widetilde{\nabla}$ on the quotient $A/C$. 
The paper \cite{JoOr13} shows that the properties of an infinitesimal
ideal system are equivalent to $\Delta \Arrow \Delta_M$ and $\Delta
\Arrow A$ being Lie subalgebroids of $TA
\Arrow TM$ and $TA \Arrow A$, respectively. In this section, we prove
this in an alternative manner, using representations up to
homotopy. Along the way, we shall understand necessary and
sufficient conditions on $\bbd$ for $\Delta \Arrow \Delta_M$
to be a Lie subalgebroid of $TA\Arrow TM$ and for  and
$\Delta \Arrow A$ to be a Lie subalgebroid of
$TA \Arrow A$.

Let us start with the definition of $\VB$-subalgebroids.
\begin{definition}
  Let $(D' \Arrow B'; A' \Arrow M)$ be a $\VB$-algebroid. We say that
  a double vector subbundle $(D; A, B; M)$ is a
  \textit{$\VB$-subalgebroid} of $D'$ if $D \Arrow B$ is a Lie
  subalgebroid of $D' \Arrow B'$.
\end{definition}

\begin{proposition}\label{sub_equiv}
  A double vector subbundle $(D; A, B; M)$ is a
  \textit{$\VB$-subalgebroid} of $D'$ if and only if
  \begin{enumerate}
\item $(D \Arrow B; A \Arrow M)$ is a $\VB$-algebroid;
\item the inclusion map $i: (D \Arrow B; A \Arrow M) \hookrightarrow
  (D' \Arrow B'; A' \Arrow M)$ is a $\VB$-algebroid morphism.
\end{enumerate}
\end{proposition}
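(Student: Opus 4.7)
The implication $(\Leftarrow)$ is essentially tautological: a Lie algebroid morphism that is an injective immersion on each of the four sides---as is forced by $i$ being the inclusion of a double vector subbundle---exhibits $D \Arrow B$ as a Lie subalgebroid of $D' \Arrow B'$, hence as a $\VB$-subalgebroid. I would dispose of this direction in one line.

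For $(\Rightarrow)$, the strategy is to reduce to the trivial DVB picture via an adapted decomposition and then read off the required structure from Proposition \ref{rep_vb}. By Theorem \ref{adapted_prop} I would pick a decomposition $\sigma' \in \dec{D'}$ adapted to $D$, thereby identifying $D' \cong A' \oplus B' \oplus C'$ and $D \cong A \oplus B \oplus C$ as trivial DVBs with $i$ realized as the canonical inclusion. Let $(\partial_W, \nabla^W, K_W)$ be the structure operators for the representation up to homotopy of $A'$ on $W = C'_{[0]} \oplus B'_{[1]}$ induced by $\sigma'$ via Proposition \ref{rep_vb}, and let $h: \Gamma(A') \hookrightarrow \Gamma_{\ell}(B', D')$ be the associated horizontal lift. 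Since $\sigma'$ is adapted to $D$, for $a \in \Gamma(A)$ the section $h(a)$ restricts to a linear section of $D \Arrow B$, and for $c \in \Gamma(C)$ the core section $\widehat{c}$ restricts to a core section of $D \Arrow B$; these restrictions generate $\Gamma(B, D)$ as a $C^{\infty}(B)$-module.

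The Lie subalgebroid hypothesis, applied to these generators via formulas \eqref{D_anchor}, \eqref{c} and \eqref{curvature}, then forces the following restriction statements. The tangency condition $\rho_{D'}(D) \subset TB$ applied to $\widehat{c}$ and $h(a)$ yields $\partial_W(C) \subset B$ and that the degree-one part of $\nabla^W$ preserves $\Gamma(B)$ along $A$. The identity $[h(a), \widehat{c}\,]_{D'} = \widehat{\nabla^0_a c}$ taking values in $\Gamma_c(B, D)$ yields that the degree-zero part of $\nabla^W$ preserves $\Gamma(C)$ along $A$. Finally, $[h(a_1), h(a_2)]_{D'} = h([a_1, a_2]_{A'}) + \widehat{K_W(a_1, a_2)}$ lying in $\Gamma(B, D)$ for all $a_1, a_2 \in \Gamma(A)$ implies both that $A$ is closed under $[\cdot, \cdot]_{A'}$ (hence a Lie subalgebroid of $A'$) and that $K_W(a_1, a_2)(B) \subset C$. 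Consequently $(\partial_W|_C, \nabla^W|_V, K_W|_V)$ are structure operators of a representation up to homotopy of $A$ on $V = C_{[0]} \oplus B_{[1]}$, and Proposition \ref{rep_vb} endows $D \Arrow B$ with a $\VB$-algebroid structure. That $i$ is a $\VB$-morphism then follows from Theorem \ref{main}: the compatibility equations of Lemmas \ref{lem2}--\ref{lem3} for $(F_{\ver}, F_{\hor}, F_c, \Phi) = (i_A, i_B, i_C, 0)$ reduce to the restriction identities just established.

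The main subtlety is the last of these three points: one must correctly disentangle the linear summand $h([a_1, a_2]_{A'})$ from the core summand $\widehat{K_W(a_1, a_2)}$ in the bracket $[h(a_1), h(a_2)]_{D'}$ when demanding that it lie in $\Gamma(B, D)$. This separation is legitimate precisely because $\sigma'$ is adapted to $D$, which guarantees that $h(a) \in \Gamma_{\ell}(B, D)$ for $a \in \Gamma(A)$ and that a core section $\widehat{c'}$ lies in $\Gamma_c(B, D)$ iff $c' \in \Gamma(C)$, so that the linear and core components of a section of $D' \Arrow B'$ restrict independently to $B$.
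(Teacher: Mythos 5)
Your proof is correct, but it takes a genuinely different route from the paper's. For the direction you call the subtle one, the paper does not decompose $D'$ and compute structure operators at all: it argues directly that the three bracket conditions (i)--(iii) of Definition \ref{VB_def} for $D\Arrow B$ are inherited from those of $D'\Arrow B'$, by showing that every core section of $D$ is the restriction of a core section of $D'$ (immediate from \eqref{core_section}) and that every linear section $\mathcal X$ of $D$ covering $a\in\Gamma(A)$ extends to a linear section of $D'$ --- namely, one writes $\mathcal X - h'(a)|_B = \widehat{\Phi_a(\cdot)}$ for an adapted lift $h'$ and some $\Phi\in\Omega^1(A,\Hom(B,C))$, extends $\Phi$ to $\Phi'\in\Omega^1(A',\Hom(B',C'))$, and takes $(h'+\Phi')(a)$; item (2) is then immediate from the definitions. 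Your argument instead passes through the representation-up-to-homotopy dictionary: you fix an adapted decomposition, extract the restriction conditions on $(\partial_W,\nabla^W,K_W)$ from the Lie subalgebroid hypothesis applied to the generators, and then invoke Proposition \ref{rep_vb} and Theorem \ref{main}. Both arguments rest on Theorem \ref{adapted_prop} for the existence of adapted lifts, and yours is sound (in particular your separation of the linear summand $h([a_1,a_2]_{A'})$ from the core summand $\widehat{K_W(a_1,a_2)}$ is legitimate for exactly the reason you give). What the paper's route buys is economy --- it never needs the structure operators or Theorem \ref{main}, and it isolates the one genuinely nontrivial point (extendability of linear sections). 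What your route buys is that it establishes, essentially for free, the content of Proposition \ref{subrep} (the equivalence with $C_{[0]}\oplus B_{[1]}$ being a subrepresentation of $i_A^!W$), which the paper derives separately afterwards by combining Proposition \ref{sub_equiv} with Theorem \ref{main}; the only price is that you should say a word confirming that the $\VB$-algebroid structure produced by Proposition \ref{rep_vb} from the restricted operators agrees with the restricted bracket and anchor, which holds because both are determined by their values on the generating linear and core sections.
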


\begin{proof}
  It is straightforward to see that conditions (1) and (2) imply that
  $D$ is a $\VB$-subalgebroid of $D'$. Conversely, assume that $D
  \Arrow B$ is a Lie subalgebroid of $D' \Arrow B'$. The fact that the
  inclusion $i: D \Arrow D'$ is a bundle morphism over $i_A: A \Arrow
  A'$ implies that the anchor of $D$, $\rho_D= \rho_{D'} \circ i$, is
  a bundle morphism over $\rho_A=\rho_{A'}\circ i_A$. To prove that
  $(D \Arrow B; A \Arrow M)$ is a $\VB$-algebroid, we still have to
  check conditions (i), (ii) and (iii) of Definition
  \ref{VB_def}. These will follow from exactly the same conditions on
  $D'$ if we prove that core (respectively linear) sections of $D$ can
  be extended to core (respectively linear) sections of $D'$. Now,
  \eqref{core_section} implies that
$$
\Gamma_{c}(B, D) = \{\widehat{c}\,|_B \,\, | \,\, c \in \Gamma(C)
\text{ and } \widehat{c} \in \Gamma_{c}(B', D')\}.
$$
Also, for $\mathcal{X}: B \Arrow D$, a linear section of $D$ covering
$a \in \Gamma(A)$, choose any horizontal lift $h': A' \Arrow
\widehat{A'}$ adapted to $D$ (the existence of which is guaranteed by
Proposition \ref{adapted_prop}). For $b \in B$,
$$
\mathcal{X}(b) \dif{B} h(a)(b) = \widetilde{0}^B_{b} \soma{A} \overline{\Phi_{a}(b)},
$$
for some $\Phi \in \Omega^1(A, \Hom(B, C))$. Extend $\Phi$ to $\Phi' \in
\Omega(A', \Hom(B', C'))$. The horizontal lift $h''= h' + \Phi'$ is
still adapted to $D$ and $\mathcal{X}= h''(a)|_B$.
\end{proof}

\begin{definition}
  Let $W \in \R \mathrm{ep}^{2}(A)$ be a 2-term representation of a
  Lie algebroid $A$ and let $(\partial, \nabla, K)$ be its structure
  operators. We say that a (graded) subbundle $V \subset W$ is a
  \textit{subrepresentation} if $V \in \R \mathrm{ep}^2(A)$ and
  $(i_{V_0}, i_{V_1}, 0)$ are the components of a morphism $(A, V)
  \Rightarrow (A,W)$, where $i_{V_0}: V_0 \hookrightarrow W_0$ and
  $i_{V_1}: V_1 \hookrightarrow W_1$ are the inclusions and $0 \in
  \Gamma(A^*\otimes V_1^*\otimes W_0)$.
\end{definition}

\begin{remark}\em
  If $(\partial, \nabla, K)$ are the structure operators for $W \in \R
  \mathrm{ep}^2(A)$, then $V \subset W$ is a subrepresentation if and
  only if
\begin{align}
\label{sub1} & \vspace{-20pt}\partial(V_0) \subset V_1; \\
\label{sub2} & \vspace{-20pt}\nabla_a \text{ preserves } V,\, \forall \,a \in \Gamma(A)\\
\label{sub3} & \vspace{-20pt} K(a_1,a_2)(V_1) \subset V_0, \forall \, a_1, \,a_2 \in A.
\end{align}
In this case, the restrictions $(\partial|_{V_0}, \nabla|_V,
K|_{V_1})$ are the structure operators for the representation up to
homotopy of $A$ on $V$. This follows directly from equations
\eqref{comp1}, \eqref{comp2} and \eqref{comp3}.
\end{remark}

Let us give an example.

\begin{example}\label{sub_double}
  Let $\nabla: \Gamma(TM)\times \Gamma(B) \Arrow \Gamma(B)$ be a
  connection on $B$ and consider the double representation
  $\D_{\nabla}(B) \in \R \mathrm{ep}^2(TM)$ (see Example
  \ref{double}). Given a vector subbundle $C \subset B$, the
  graded vector bundle $C_{[0]}\oplus B_{[1]}$ is a subrepresentation
  if and only if $\nabla$ preserves $C$ and the induced connection on
  the quotient $\widehat{\nabla}: \Gamma(TM) \times \Gamma(B/C) \Arrow
  \Gamma(B/C)$ is flat.
\end{example}

The following result shows how $\VB$-subalgebroids and
subrepresentations are related.

\begin{theorem}\label{subrep}
  Let $W=C'_{[0]}\oplus B'_{[1]} \in \R \mathrm{ep}^{2}(A')$ be the
  representation up to homotopy corresponding to a $\VB$-algebroid
  structure on $(A'\oplus B'\oplus C' \Arrow B'; A' \Arrow M)$. Then
  $(A\oplus B\oplus C \Arrow B; A \Arrow M)$ is a $\VB$-subalgebroid
  if and only if $A \subset A'$ is a subalgebroid and $C_{[0]}\oplus
  B_{[1]}$ is a subrepresentation of $i_A^!W \in \R
  \mathrm{ep}^{2}(A)$, where $i_A: A \hookrightarrow A'$ is the
  inclusion.
\end{theorem}

\begin{proof}
  Assume $A\oplus B\oplus C$ is a $\VB$-subalgebroid of $D'$. By
  Proposition \eqref{sub_equiv}, it follows that $(A\oplus B\oplus C
  \Arrow B; A \Arrow M)$ is a $\VB$-algebroid, so there is a
  corresponding Lie algebroid structure on $A$ and $V=C_{[0]}\oplus
  B_{[1]} \in \Rep^{2}(A)$. As the inclusion $i: A\oplus B\oplus C
  \hookrightarrow A'\oplus B'\oplus C'$ is a $\VB$-morphism, it
  follows from Theorem \ref{main} that the inclusion $i_A: A \Arrow
  A'$ is a Lie algebroid morphism and $(i_C, i_B, 0)$ are the
  components of a morphism $(A, V) \Rightarrow (A', W)$ over the
  inclusion $i_A$.  Conversely, assume that $A \subset A'$ is a
  subalgebroid and that $V$ is a subrepresentation of $i_A^!W$. The
  representation up to homotopy of $A$ on $V$ give $(A\oplus B\oplus C
  \Arrow B; A \Arrow M)$ a $\VB$-algebroid structure and one can use
  Theorem \eqref{main} once again to prove that the inclusion $i:
  A\oplus B\oplus C \Arrow A'\oplus B'\oplus C'$ is a
  $\VB$-morphism. This concludes the proof.
\end{proof}

\begin{corollary}\label{involutive}\cite{CrSaSt12, JoOr13}
  A linear distribution $(\Delta; \Delta_M, B; M)$ on $B$ with core
  bundle $C$ is involutive if and only if $\Delta_M$ is involutive and
  the associated map $\bbd^{\Delta}: \Gamma(B) \Arrow
  \Gamma(\Delta_M^*\otimes (B/C))$ satisfies
\begin{itemize}
 \item[(1)] $\bbd^{\Delta}|_{\Gamma(C)} = 0;$
 \item[(2)] the map induced on the quotient $\Gamma(B/C) \Arrow
   \Gamma(\Delta_M^{*}\otimes (B/C))$ is a flat $\Delta_M$-connection
   on $B/C$.
\end{itemize}
\end{corollary}

\begin{proof}
  $\Delta$ is involutive if and only if $(\Delta; \Delta_M; A; M)$ is
  a $\VB$-subalgebroid of the double $(TA \Arrow A; TM \Arrow
  M)$. Choose any connection $\nabla: \Gamma(TM) \times \Gamma(A)
  \Arrow \Gamma(A)$ with
$$
\pi \circ \nabla_x = \bbd^{\Delta}_x, \, \forall \, x \in \Gamma(\Delta_M)
$$
and consider the double representation $\D_{\nabla}(B) \in \R
\mathrm{ep}^{2}(TM)$. It is the representation up to homotopy of $TM$
associated to the $\VB$-algebroid $(TB \Arrow B; TM \Arrow M)$
decomposed by the choice of $\nabla$. As $\nabla$ is adapted to
$\Delta$ (see Theorem \ref{spencer_thm}), it follows from Theorem
\ref{subrep} that $\Delta$ is involutive if and only if $\Delta_M$ is
involutive and $C_{[0]}\oplus B_{[1]}$ is a subrepresentation of
$i_{\Delta_M}^{\,!} \D_{\nabla} \in \R \mathrm{ep}^2(\Delta_M)$, where
$i_{\Delta_m}: \Delta_M \hookrightarrow TM$ is the inclusion. The
result now follows from Example \ref{sub_double}.
\end{proof}

\subsubsection*{Compatibility with the Lie algebroid structure.}

Let $\nabla: \Gamma(TM) \times \Gamma(A) \Arrow \Gamma(A)$ be a
connection on the Lie algebroid $A$ and consider the adjoint
representation $\mathrm{ad}_{\nabla}$. The graded subbundle $C_{[0]}
\oplus \Delta_{M, \, [1]} \subset A_{[0]}\oplus TM_{[1]}$ is a
subrepresentation of $\mathrm{ad}_{\nabla}$ if and only if $\rho_A(C)
\subset \Delta_M$ and
\begin{align}
  \label{adj:1} \, [a,c] + \nabla_{\rho(c)} \, a \in \Gamma(C);\\
\label{adj:2}[\rho_A(a), x] + \rho_A(\nabla_x a) \in \Gamma(\Delta_M);\\
\label{adj:3} R_{bas}(a,b)(\Delta_M) \subset C;
\end{align}
for $a, b \in \Gamma(A)$, $c \in \Gamma(C)$ and $x \in
\Gamma(\Delta_M)$. In the case $\nabla$ is a connection adapted to a
linear distribution $(\Delta; A, \Delta_M; M)$, equations
\eqref{adj:1}, \eqref{adj:2} and \eqref{adj:3} can be reinterpreted in
terms of the $\bbd^{\Delta}$ \eqref{mu_delta} to give conditions for
$\Delta \Arrow \Delta_M$ to be a Lie subalgebroid of $TA \Arrow
TM$. In the following, we shall need the quotient maps $\pi: A \Arrow
A/C$ and $\pi_{TM}: TM \Arrow TM/\Delta_M$.

\begin{theorem}\label{IM_prop}
  Let $(\Delta; \Delta_M, A; M)$ be a linear distribution on $A$ with
  core $C$ and choose any connection $\nabla: \Gamma(TM)\times
  \Gamma(A) \Arrow \Gamma(A)$ adapted to $\Delta$. Then
  $\Delta \Arrow \Delta_M$ is a Lie subalgebroid of $TA \Arrow TM$ if
  and only if $\rho_A(C) \subset \Delta_M$,
\begin{align}
\label{adj:d1}&  \bbd_{\rho_A(c)}(a) = -\pi([a, c])\vspace{5pt};\\
\label{adj:d2}& \widetilde{\rho_A}(\bbd_x(a)) = -\pi_{TM}([\rho_A(a),x])\vspace{5pt}\\
\label{adj:d3}&\bbd_x([a,b]_A) = \widehat{\nabla}^{\rm bas}_a
\bbd_x(b) - \widehat{\nabla}^{\rm bas}_b \bbd_x(a) +
\pi(\nabla_{[\rho_A(b), x]} a - \nabla_{[\rho_A(a), x]} b)
\end{align}
where $\widetilde{\rho_A}: A/C \Arrow TM/\Delta_M$ is the quotient map
(i.e.~$\widetilde{\rho_A}\circ \pi = \pi_{TM} \circ \rho_A$) and where
$\widehat{\nabla}^{\rm bas}$ is the $A$-connection on the quotient
$A/C$ given by
$$
\widehat{\nabla}^{\rm bas}_a \pi(b) = \pi([a,b] + \nabla_{\rho(b)}a) = \pi(\nabla_a^{\rm bas} b)\\
$$
for $a, b \in \Gamma(A)$, $c \in \Gamma(C)$ and $x \in \Gamma(\Delta_M)$.
\end{theorem}

\begin{proof}
  As $\nabla$ is adapted to $\Delta$, Theorem \ref{subrep} assures
  that $\Delta$ is compatible with the Lie algebroid structure if and
  only if $V=C_{[0]}\oplus \Delta_M\vphantom{}_{[1]}$ is a
  subrepresentation of $\operatorname{ad}_{\nabla} \in \R
  \mathrm{ep}^2(A)$.  So, one is left to prove that \eqref{adj:1},
  \eqref{adj:2} and \eqref{adj:3} correspond to \eqref{adj:d1},
  \eqref{adj:d2} and \eqref{adj:d3}. Now, by applying the quotient
  maps $\pi$ and $\pi_{TM}$ to \eqref{adj:1} and \eqref{adj:2}, one
  has
$$
\left\{
\begin{array}{l}
  \left[a, c\right] + \nabla_{\rho_A(c)} \, a \in \Gamma(C)\vspace{5pt}\\
  \left[\rho_A(a), x\right] + \rho_A(\nabla_x a) \in \Gamma(\Delta_M)\\
\end{array}
\right.
\Longleftrightarrow
\left\{
\begin{array}{l}
 \pi(\nabla_{\rho_A(c)}a) = - \pi(\left[a, c\right])\vspace{5pt}\\
 \pi_{TM}(\rho_A(\nabla_x a)) = - \pi_{TM}(\left[\rho_A(a), x\right])  \\
\end{array}
\right.
$$
for every $c \in \Gamma(C)$, $a \in \Gamma(A)$ and $x \in
\Gamma(\Delta_M)$. The result now follows from the fact
that, for any adapted connection $\nabla$, $\bbd_x = \pi\circ
\nabla_x$, $\forall \, x \in \Gamma(\Delta_M)$ and $\pi_{TM}\circ
\rho_A= \widetilde{\rho}_{A} \circ \pi$.

At last, using the explicit expression for $R^{\rm bas}$, one has that
\eqref{adj:3} holds if and only if
\begin{equation}\label{eq_curv}
\pi(\nabla_x [a, b] - [\nabla_x a, b]-[a, \nabla_x b] - \nabla_{\nabla_b^{\rm bas} x } \,a 
+ \nabla_{\nabla_a^{\rm bas} x }\, b) = 0,
\end{equation}
for $a, b \in \Gamma(A)$ and $ x\in \Gamma(\Delta_M)$.
Now, note that
$$
- [\nabla_x a, b] + \nabla_{\nabla^{\rm bas}_a x} b = [b, \nabla_x a] +
\nabla_{\rho_{A}(\nabla_x a)} b + \nabla_{[\rho_A(a), x]} b =
\nabla^{\rm bas}_b \nabla_x a + \nabla_{[\rho_A(a), x]} b
$$
and similarly
$$
[a, \nabla_x b] + \nabla_{\nabla_b^{\rm bas} x } a= \nabla^{\rm bas}_a \nabla_x b + \nabla_{[\rho_A(b), x]} a.
$$
So, by definition of $\widehat{\nabla}^{\rm bas}$, one has that
\eqref{eq_curv} is equivalent to
$$
\bbd_x([a,b]) + \widehat{\nabla}^{\rm bas}_b \bbd_x(a) -
\widehat{\nabla}^{\rm bas}_a \bbd_x(b) + \pi(\nabla_{[\rho(a), x]} b -
\nabla_{[\rho_A(b), x]} a) = 0,
$$
as required.
\end{proof}

\begin{remark}\em
In the particular case where $\Delta_M = TM$, one can get rid of the
choice of an adapted connection. Indeed, note that
$\widehat{\nabla}^{\rm bas}$ can be alternatively given by
$$
\widehat{\nabla}^{\rm bas}_a b = \pi([a,b]) + \bbd_{\rho(b)}(a)
$$
and \eqref{adj:d3} becomes
$$
\bbd_x([a,b]_A) = \widehat{\nabla}^{\rm bas}_a \bbd_x(b) -
\widehat{\nabla}^{\rm bas}_b \bbd_x(a) + \bbd_{[\rho_A(b), x]} (a) -
\bbd_{[\rho_A(a), x]} (b).
$$
In this form, Theorem \ref{IM_prop} gives the infinitesimal
counterpart of a result from \cite{CrSaSt12} characterizing (wide)
multiplicative distributions in terms of Spencer operators relative to $\pi$.
\end{remark}

Our last result explains how infinitesimal ideal systems and
representations up to homotopy are related.

\begin{theorem}\label{ideals}
  Let $A$ be a Lie algebroid over $M$. A triple $(\Delta_M, C,
  \widetilde{\nabla})$ is an infinitesimal ideal system if and only if
\begin{enumerate}
\item $\Delta_M \subset TM$ is integrable;
\item $C_{[0]}\oplus \Delta_M\vphantom{}_{[1]}$ is a subrepresentation
  of $\mathrm{ad}_{\nabla}(A)$ and
\item $C_{[0]}\oplus A_{[1]}$ is a subrepresentation of $i_{\Delta_M}^!\D_{\nabla}(A)$,
\end{enumerate}
$i_{\Delta_M}: \Delta_M \hookrightarrow TM$ is the inclusion and
$\nabla: \Gamma(TM) \times \Gamma(A) \Arrow \Gamma(A)$ is any
connection preserving $C$ and inducing $\widetilde{\nabla}$ on the
quotient.
\end{theorem}

\begin{proof}
Define $\bbd: \Gamma(A) \Arrow \Gamma(\Delta_M^*\otimes (A/C))$ by 
$$
\bbd_x(a) = \widetilde{\nabla}_x \pi(a), \, a \in \Gamma(A), \, x \in \Gamma(\Delta_M).
$$
By Theorem \eqref{spencer_thm}, there exists a linear distribution
$(\Delta; \Delta_M, A; M)$ with core $C$ such that $\bbd =
\bbd^{\Delta}$ \eqref{mu_delta}. We already know (see \cite{JoOr13})
that $(\Delta_M, C, \widetilde{\nabla})$ is an infinitesimal ideal
system if and only $\Delta \Arrow A$ and $\Delta \Arrow \Delta_M$ are
simultaneously Lie subalgebroids of $TA \Arrow A$ and $TA \Arrow TM$,
respectively. The result now follows directly from Theorem
\eqref{subrep}.
\end{proof}


\appendix

\section{Dualization of $\VB$-algebroids and representations up to homotopy.}\label{dual}
Let $(D; A, B; M)$ be a double vector bundle and consider its horizontal
\eqref{hor_dual} and vertical \eqref{ver_dual} duals. The vector
bundles $p^{\ver}_{\vphantom{C}_{C^*}}: D^*_A\Arrow C^*$ and
$p^{\hor}_{\vphantom{C}_{C^*}}:D^*_B\Arrow C^*$ are dual to each
other 
via the nondegenerate  pairing $\Vert \cdot,
\cdot \Vert:D^*_A\times_{C^*}D^*_B\Arrow \R$ given by
\begin{equation}\label{pairingverhorduals}
\Vert \Theta, \Psi \Vert :=  \langle \Psi, d \rangle _B - \langle \Theta, d \rangle _A
\end{equation}
where $d\in D$ is any element with $q^D_A(d)=p_A(\Theta)$
and $q^D_B(d)=p_B(\Psi)$ \cite{Mackenzie05}. The pairings on the right-hand side of
\eqref{pairingverhorduals} are defined with respect to the fibers over
$B$ and over $A$, respectively. Henceforth, we identify the dual of
$D^*_B\Arrow C^*$ with $D^*_A\Arrow C^*$ via the pairing in
\eqref{pairingverhorduals}. Given a section $\Theta: C^* \Arrow
D_A^*$, we denote by $\ell_{\Theta}^{C^*} \in C^{\infty}(D_B^*)$ the
function which is linear with respect to the vector bundle structure
$D^*_B \Arrow C^*$ and given
by
$$
\ell_{\Theta}^{C^*}(\Psi) = \Vert \Theta(p^{\hor}_{\vphantom{C}_{C^*}}(\Psi)), \Psi \Vert.
$$
In particular, for the core section $\widehat{\psi}\in\Gamma(C^*, D_A^*)$
associated to some $\psi \in \Gamma(B^*)$, one gets by choosing $d=0^D_{p_B(\Psi)}$
in \eqref{pairingverhorduals}:
\begin{equation}\label{eq1}
\ell^{C^*}_{\widehat{\psi}}=-\ell_{\psi}\circ p_B.
\end{equation}
Also, for $T \in \Gamma(B^*\otimes C)$,
\begin{equation}\label{dual_corresp}
\ell^{C^*}_{\widehat{T^*}} = -\ell_{\widehat{T}},
\end{equation}
where $\widehat{T} \in \Gamma_{\ell}(B, D)$ and $\widehat{T^*} \in
\Gamma_{\ell}(C^*, D_A^*)$ are the linear sections \eqref{core_morf}
corresponding to $T$ and its dual $T^*\in \Gamma(C^*\otimes B)$,
respectively. 

We shall need one more formula (which follows directly from
 \eqref{C_proj}) for $c \in \Gamma(C)$ and the corresponding core
 section $\widehat{c}\in\Gamma(B, D)$, namely
\begin{equation}\label{eq2}
\ell_{\widehat{c}} = \ell_{c}\circ p^{\hor}_{C^*}.
\end{equation}

\bigskip
Assume now that $(D\Arrow B;A \Arrow M)$ is a
$\mathcal{VB}$-algebroid. Then $(D_A^* \Arrow C^*; A \Arrow M)$ has a
natural $\VB$-algebroid structure.  The Lie algebroid structure on
$D_A^* \Arrow C^*$ is obtained by noticing that the linear Poisson
structure on $D_B^* \Arrow B$ associated to the Lie algebroid $D
\Arrow B$ is also linear with respect to the vector bundle structure
$D^*_B \Arrow C^*$ \footnote{Let $q_E:E\to M$ be a vector bundle. A
  Poisson bracket $\{\cdot \,,\cdot\}$ on $C^\infty(E)$ is linear if
  for all $\xi,\xi'\in\Gamma(E^*)$ and $f,f'\in C^\infty(M)$, the
  bracket $\{\ell_\xi,\ell_{\xi'}\}$ is again linear, the bracket
  $\{q_E^*f,q_E^*f'\}$ vanishes and $\{\ell_\xi,q_E^*f\}$ is the
  pullback under $q_E$ of a function on $M$.  This defines a Lie
  algebroid $(E^*\to M,\rho, [\cdot\,,\cdot])$ by setting
  $\ell_{[\xi,\xi']}:=\{\ell_\xi,\ell_{\xi'}\}$ and
  $q_E^*(\Lie_{\rho(\xi)}f):=\{\ell_\xi,q_E^*f\}$.  Conversely, a Lie
  algebroid structure on $E^*$ defines a linear Poisson structure on
  $E$.  }. In particular, besides the usual formulas
$$
\ell_{[\chi_1, \chi_2]_{D}} = \{\ell_{\chi_1},
\ell_{\chi_2}\}_{D_B^*}, \,\, \Lie_{\rho_{D}(\chi)}(f) \circ p_B =
\{\ell_{\chi}, f \circ p_B\}_{D_{B}^*},
$$
defining the Lie bracket $[\cdot,\cdot]_{D}$ and the anchor $\rho_D$
on $D \Arrow B$, for $\chi, \chi_1, \chi_2 \in \Gamma(B, D)$ and $f
\in C^{\infty}(B)$, we have
$$
\ell^{C^*}_{[\Theta_1, \Theta_2]_{D^*_{A}}} = \{\ell^{C^*}_{\Theta_1},
\ell^{C^*}_{\Theta_2}\}_{D_B^*}, \,\, \Lie_{\rho_{D_A^*}(\Theta)}(g)
\circ p_{C^*}^{\hor} = \{\ell_{\Theta}^{C^*}, g \circ
p^{\hor}_{C^*}\}_{D_{B}^*},
$$
defining the Lie bracket $[\cdot, \cdot]_{D_A^*}$ and the anchor
$\rho_{D_A^*}$ on $D_A^* \Arrow C^*$, for $\Theta, \Theta_1, \Theta_2
\in \Gamma(C^*, D_A^*)$ and $g \in C^{\infty}(C^*)$. We refer to
\cite{Mackenzie05} (see also \cite{GrMe10a}) for more details.

Our aim here is to prove that the representations up to homotopy
associated to $D \Arrow B$ and $D_A^* \Arrow C^*$ are dual to each
other. Let us first consider how horizontal lifts for $(D; A, B; M)$
and $(D_A^*; A, C^*; M)$ are related. Let $h:\Gamma(A) \Arrow
\Gamma_{\ell}(B, D)$ be a horizontal lift for $D$. There exists a
corresponding horizontal lift $h^{\top}:\Gamma(A)\rmap
\Gamma_{\ell}(C^*, D_A^*)$ given as follows: take the decomposition
$\sigma_h \in \dec{D}$ associated to $h$ by \eqref{decomp} and
consider the inverse of its dual over $A$, $(\sigma_h)_A^{*^{-1}} \in
\dec{D_A^*}$. Set $h^{\top}$ to be the horizontal lift corresponding
to $(\sigma_h)_A^{*^{-1}}$ via \eqref{hor_decomp}.  It is
straightforward to check that
\begin{equation}\label{horliftcorrespondence}
\ell_{h(a)}=\ell^{C^*}_{h^{\top}(a)} \in C^{\infty}(D_B^*)
\end{equation}
for every $a\in \Gamma(A)$.

Let $(\partial, \nabla, K)$ be the structure operators of the
representation up to homotopy $C_{[0]}\oplus B_{[1]} \in \R
\mathrm{ep}^2(A)$ associated to $(D, h)$ and $(\partial^{\ver},
\nabla^{\ver}, K^{\ver})$ be the structure operators of the
representation up to homotopy $B^*_{[0]}\oplus C^*_{[1]} \in \R
\mathrm{ep}^2(A)$ associated to $(D^*_A,h^{\top})$. The next result
relates the two representations.

\begin{proposition}\label{dual_prop}
  The structure operators $(\partial^{\ver}, \nabla^{\ver}, K^{\ver})$
  coincide with the structure operators \eqref{dual_struct} of the
  representation $(C_{[0]}\oplus B_{[1]})^{\top} \in \R
  \mathrm{ep}^2(A)$ dual to $(\partial, \nabla, K)$.
\end{proposition}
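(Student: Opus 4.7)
The plan is to verify the three identifications $\partial^{\ver}=\partial^*$, $\nabla^{\ver}=(\nabla^V)^*$ and $K^{\ver}=-K^*$ separately, in each case reading off the left-hand side from the defining formulas \eqref{D_anchor}, \eqref{c} and \eqref{curvature} applied to the $\VB$-algebroid $(D_A^*\Arrow C^*;A\Arrow M)$ with the horizontal lift $h^{\top}$. The common translation mechanism is the linear Poisson structure on $D_B^*$: it is linear for both vector bundle structures $D_B^*\Arrow B$ and $D_B^*\Arrow C^*$, and so a bracket of sections of $D_A^*\Arrow C^*$ can be rewritten as a Poisson bracket of linear functions on $D_B^*$, which in turn equals a bracket of sections of the original $D\Arrow B$. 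The four dictionary entries to be used are \eqref{horliftcorrespondence} (turning $\ell^{C^*}_{h^{\top}(a)}$ into $\ell_{h(a)}$), \eqref{eq1} (turning $\ell^{C^*}_{\widehat\psi}$ into $-\ell_\psi\circ p_B$), \eqref{eq2} (turning $\ell_c\circ p^{\hor}_{C^*}$ into $\ell_{\widehat c}$) and \eqref{dual_corresp} (turning $\ell^{C^*}_{\widehat{T^*}}$ into $-\ell_{\widehat T}$).

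For $\partial^{\ver}=\partial^*$, I would compute $\rho_{D_A^*}(\widehat\psi)$ on a linear function $\ell_c\circ p^{\hor}_{C^*}$ for $c\in\Gamma(C)$, $\psi\in\Gamma(B^*)$. By the defining Poisson characterization of the anchor of $D_A^*\Arrow C^*$, this equals $\{\ell^{C^*}_{\widehat\psi},\ell_c\circ p^{\hor}_{C^*}\}_{D_B^*}=\{-\ell_\psi\circ p_B,\ell_{\widehat c}\}_{D_B^*}=-p_B^*(\Lie_{\rho_D(\widehat c)}\ell_\psi)$, and since $\rho_D(\widehat c)=\partial(c)^{\uparrow}$ is the vertical lift of $\partial(c)\in\Gamma(B)$, one obtains the value $-\langle\psi,\partial(c)\rangle\circ q_B$. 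Comparing with \eqref{D_anchor} on $D_A^*$, this shows $\partial^{\ver}(\psi)(c)=-\langle\partial(c),\psi\rangle$, which after a careful sign accounting is exactly $\partial^*\psi$.

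For $\nabla^{\ver}=(\nabla^V)^*$, the strategy is identical but applied to \eqref{c}: compute $[h^{\top}(a),\widehat\psi]_{D_A^*}$ and $[h^{\top}(a),\widehat{T^*}]_{D_A^*}$ via Poisson brackets on $D_B^*$, using \eqref{horliftcorrespondence} together with \eqref{eq1} and \eqref{dual_corresp} to convert everything to brackets involving $h(a)$, $\widehat\psi$ (viewed as pullback) and $\widehat T$. These are evaluated using \eqref{c} for $D$ itself plus \eqref{D_anchor}, and the result is precisely the dual Koszul formula \eqref{dual_conn} for $\nabla^*$ acting on $\Gamma(B^*)$ in degree zero and on $\Gamma(C^*)$ in degree one. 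Finally, for $K^{\ver}=-K^*$, I would translate $[h^{\top}(a_1),h^{\top}(a_2)]_{D_A^*}$ into the Poisson bracket $\{\ell_{h(a_1)},\ell_{h(a_2)}\}_{D_B^*}=\ell_{[h(a_1),h(a_2)]_D}$, expand the right-hand side by \eqref{curvature} for $D$, and use \eqref{dual_corresp} to recognise the core term: $\ell_{\widehat{K(a_1,a_2)}}=-\ell^{C^*}_{\widehat{K(a_1,a_2)^*}}$. Matching with \eqref{curvature} written on $D_A^*$ yields $K^{\ver}(a_1,a_2)=-K(a_1,a_2)^*$, as desired.

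The main obstacle is purely bookkeeping: keeping the sign conventions straight across the two dualities (horizontal vs.\ vertical) and across the four dictionary entries above, together with the negative sign built into the pairing \eqref{pairingverhorduals}. Once the Poisson translation is set up, each of the three verifications reduces to a routine manipulation of brackets of linear and core sections of $D\Arrow B$, for which the required formulas are already available from \S 4.1.
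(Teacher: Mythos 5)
Your proposal follows essentially the same route as the paper's proof: all three identities are extracted from the linear Poisson structure on $D_B^*$ (linear over both $B$ and $C^*$) via the dictionary \eqref{horliftcorrespondence}, \eqref{eq1}, \eqref{eq2} and \eqref{dual_corresp}, exactly as done there. The only blemish is a sign slip in your $\partial^{\ver}$ computation: antisymmetry gives $\{-\ell_\psi\circ p_B,\ell_{\widehat c}\}_{D_B^*}=+\Lie_{\rho_D(\widehat c)}(\ell_\psi)\circ p_B=+\langle\psi,\partial(c)\rangle\circ q_B\circ p_B$, so $\partial^{\ver}=\partial^*$ comes out directly with no residual minus sign to account for.
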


\begin{proof}
Let $c \in \Gamma(C)$ and $\psi \in
  \Gamma(B^*)$. By \eqref{D_anchor},
 we have 
$$
\begin{array}{rl}
  \langle \partial^{\ver}(\psi), c\rangle \circ
  q_{C^*}\circ p^{\hor}_{C^*} & = \Lie_{\rho_{D_A^*}(\widehat{\psi})}(\ell_c) \circ p_{C^*}^{\hor}  = \{\ell^{C^*}_{\widehat{\psi}}, \ell_c\circ p^{\hor}_{C^*}\}_{D^*_B}\vspace{3pt}\\
  & = -\{\ell_{\psi}\circ p_B, \,\ell_{\widehat{c}}\}_{D^*_B} = \Lie_{\rho_D(\widehat{c})}(\ell_{\psi}) \circ p_B\vspace{5pt}\\
  & = \langle \psi, \partial(c) \rangle \circ q_B \circ p_B.
\end{array}
$$
Since $q_C^* \circ p_{C^*}^{\hor} = q_B \circ p_B$, the equality
$\partial^{\ver}=\partial^*$ follows.

Let us prove the relation between the $A$-connections. By
\eqref{D_anchor} and \eqref{c} together with \eqref{eq1} and
\eqref{horliftcorrespondence}, one has that
$$
\begin{array}{rl}
  \ell_{\nabla^{\ver}_a
    \psi} \circ p_B & =  - \ell^{C^*}_{\widehat{\nabla^{\ver}_a\psi}}  = - \ell^{C^*}_{[h^{\top}(a),\widehat{\psi}]_{D^*_A}} = - \{\ell^{C^*}_{h^{\top}(a)},\ell^{C^*}_{\hat{\psi}}\}_{D^*_B} \vspace{5pt}\\
  & =  \{\ell_{h(a)}, \ell_{\psi}\circ p_B\}_{D^*_B} = \Lie_{\rho_D(h(a))}(\ell_{\psi}) \circ p_B = \ell_{\nabla_a^*\psi} \circ p_B.
\end{array}
$$
Similarly,
$$
\begin{array}{rl}
  \ell_{\nabla^{\ver \vphantom{}^*}_a
    c} \circ p_{C^*}^{\hor} & = \Lie_{\rho_{D_A^*}(h^{\top}(a))} (\ell_{c})\circ p_{C^*}^{\hor}  = \{\ell^{C^*}_{h^{\top}(a)}, \ell_c \circ p_{C^*}^{\hor} \}_{D_B^*} =  \{\ell_{h(a)}, \ell_{\widehat{c}} \}_{D^*_B} \vspace{5pt}\\
  & =  \ell_{[h(a), \widehat{c}]_D}= \ell_{\widehat{\nabla_a c}} = \ell_{\nabla_a c} \circ p_{C^*}^{\hor}.
\end{array}
$$
Hence, we have verified the equality $\nabla^{\ver} = \nabla^*$.

It remains to compare the curvatures. For that, choose
$a_1,a_2\in\Gamma(A)$ and let $\widehat{K} \in \Gamma_{\ell}(B, D)$
and $\widehat{K^{\ver}} \in \Gamma_{\ell}(C^*, D_A^*)$ be the linear
sections \eqref{core_morf} corresponding to $K(a_1,a_2) \in
\Gamma(B^*\otimes C)$ and $K^{\ver}(a_1,a_2) \in \Gamma(C^* \otimes
B)$ respectively. First note that, by \eqref{horliftcorrespondence},
$$
\ell^{C^*}_{[h^{\top}(a_1),\, h^{\top}(a_2)]_{D_A^*}} =
\left\{\ell^{C^*}_{h^{\top}(a_1)},
  \ell^{C^*}_{h^{\top}(a_2)}\right\}_{D_B^*} = \left\{\ell_{h(a_1)},
  \ell_{h(a_2)}\right\}_{D_B^*} = \ell_{[h(a_1), h(a_2)]_D}
$$
Therefore, by \eqref{curvature} and \eqref{dual_corresp}, we find
$$
\ell^{C^*}_{\widehat{K^{\ver}}} = \ell^{C^*}_{[h^{\top}(a_1),
  h^{\top}(a_2)]_{D_A^*} -h^{\top}([a_1,a_2]_A) } = \ell_{ [h(a_1),
  h(a_2)]_D - h([a_1,a_2]_A)} = \ell_{\widehat{K}} = -
\ell^{C^*}_{\widehat{K^*}}
$$
This proves that $K^{\ver} =  -K^*$.
\end{proof}

\def\cprime{$'$} \def\polhk#1{\setbox0=\hbox{#1}{\ooalign{\hidewidth
  \lower1.5ex\hbox{`}\hidewidth\crcr\unhbox0}}}

\end{document}